\newenvironment{myItemize}{ 
  \begin{list}{\raisebox{2.2pt}{$\centerdot$}}{%
      \setlength\topsep{5pt}
      \setlength\itemsep{-2pt}p
      \setlength\leftmargin{25pt}
      \setlength\labelwidth{20pt}
    }
  }{
  \end{list}
}
\renewcommand{\labelenumi}{$\mathrm{({\roman{enumi}})}$}
\newenvironment{myEnumerate}{ 
  \begin{list}{\labelenumi}{%
      \usecounter{enumi}
      \setlength\topsep{5pt}
      \setlength\itemsep{-2pt}
      \setlength\leftmargin{25pt}
      \setlength\labelwidth{20pt}
    }
  }{
  \end{list}
}
\newenvironment{myAlphanumerate}{ 
  \begin{list}{\labelenumii}{%
      \usecounter{enumii}
      \setlength\topsep{5pt}
      \setlength\itemsep{-2pt}
      \setlength\leftmargin{25pt}
      \setlength\labelwidth{20pt}
    }
  }{
  \end{list}
}
\renewcommand{\Cup}{\bigcup}
\renewcommand{\a}{\alpha}
\renewcommand{\b}{\beta}
\newcommand{\g}{\gamma}
\renewcommand{\d}{\delta}
\renewcommand{\k}{\kappa}
\renewcommand{\l}{\lambda}
\newcommand{\f}{\varphi}
\renewcommand{\phi}{\varphi}
\renewcommand{\o}{\omega}
\newcommand{\s}{\sigma}
\newcommand{\n}{\eta}
\newcommand{\es}{\varnothing}
\newcommand{\A}{\mathcal{A}}
\newcommand{\Q}{\mathbb{Q}} 
\newcommand{\R}{\mathbb{R}} 
\newcommand{\C}{\mathbb{C}} 
\newcommand{\wins}{\uparrow}
\renewcommand{\P}{\mathbb{P}}             
\newcommand{\forces}{\Vdash}
\newcommand{\ISO}{\operatorname{ISO}}
\newcommand{\DLO}{{\operatorname{DLO}}}
\newcommand{\rank}{\operatorname{rank}} 
\renewcommand{\ll}{\mathscr{L}}
 \newcommand{\CUB}{\operatorname{CUB}}
\newcommand{\pr}{\operatorname{pr}}
\newcommand{\rest}{\!\restriction\!}
\newcommand{\dom}{\operatorname{dom}}
\newcommand{\ran}{\operatorname{ran}}
 \renewcommand{\le}{\leqslant}  
 \renewcommand{\ge}{\geqslant}  
 \newcommand{\Sii}{{\Sigma_1^1}}
 \newcommand{\Dii}{{\Delta_1^1}}
 \newcommand{\EF}{\operatorname{EF}} 
 \newcommand{\PlOne}{\,{\textrm{I}}}
 \newcommand{\PlTwo}{\textrm{I\hspace{-0.5pt}I}}
\newcommand{\Borel}{\operatorname{Borel}}  
\newcommand{\Baire}{\operatorname{Baire}}  
\newtheorem*{Thm*}{Theorem}
\newtheorem{Thm}{Theorem}[section]
\newtheorem{Lemma}[Thm]{Lemma}
\newtheorem{Cor}[Thm]{Corollary}
\newtheorem{Claim}{Claim}[Thm]
\newtheorem{Fact}[Thm]{Fact}
\newenvironment{claim*}{\vspace{7pt}\noindent\textbf{Claim.}}{}
\theoremstyle{definition}
\newtheorem{Def}[Thm]{Definition}
\newtheorem{Open}[Thm]{Open Question}
\theoremstyle{remark}
\newtheorem{RemarkN}[Thm]{Remark}
\newcommand{\proofvpara}{\text{}}
\newenvironment{proofVOf}[1] {\vspace{5pt}\noindent \textit{Proof of #1.}\ignorespaces\renewcommand{\proofvpara}{\text{#1}}}
{\nopagebreak\hspace*{\fill}\mbox{$\square_{\,\proofvpara}$}\\\vspace{-8pt}}
\author{Tapani Hyttinen\footnote{Partially supported by the Academy of Finland through its grant WBS 1251557.}, 
  Vadim Kulikov\footnote{Research supported by the Science Foundation of the University of Helsinki.}\\
 Department of Mathematics and Statistics\\ Gustav H\"allstr\"omin katu 2b\\ 00014, University of Helsinki}
\date{}
\title{Borel* Sets in the Generalized Baire Space}
\begin{document}

\maketitle

\begin{abstract}
  We start by giving a survey to the theory of $\Borel^{*}(\k)$ sets
  in the generalized Baire space $\Baire(\k)=\k^{\k}$. In particular we look at the relation of
  this complexity class to other complexity classes which we denote by
  $\Borel(\k)$, $\Dii(\k)$ and $\Sii(\k)$ and  
  the connections between $\Borel^*(\k)$ sets and the infinitely deep language $M_{\k^+\k}$.
  In the end of the paper we will prove the consistency of
  $\Borel^{*}(\k)\ne\Sigma^{1}_{1}(\k)$.
\end{abstract}

\vspace{5pt}

Key words: descriptive complexity, generalized Baire space.

\vspace{10pt}

2012 MSC: 03E47, 03-02

\vspace{10pt}

\vspace{15pt}

Among many classification problems studied in mathematics,
the classification of the subsets of the reals according to their
topological complexity is very classical. It is also very useful:
On one hand
in many branches of mathematics
all subsets of the reals that one really comes across, are of relatively low
complexity. On the other hand
(e.g.) ZFC can prove properties
for these simple sets that it cannot prove for arbitrary
sets. Of many such examples let us mention
the following two: continuum hypothesis is true for Borel sets
(i.e. they are either countable or
of the same size as continuum) while
ZFC does not prove this for arbitrary subsets
of the reals and all $\Sigma^{1}_{1}$
sets are Lebesgue measurable but ZFC also proves the existence of a
non-measurable set. For Borel sets,
see below, and for $\Sigma^{1}_{1}$ sets, see Section \ref{sec:1}.

This classification of the subsets of the reals can also be used
to classify various other mathematical objects. Let
us fix a countable vocabulary $L$. Then every real $r$ can be seen
as a code for an $L$-structure $\A_{r}$ with
the set of natural numbers as the universe
so that every such structure also has a code (not necessarily unique),
see Section 1 for details.
Then one can classify
$L$-theories $T$
(not necessarily first-order)
according
to the complexity of the set $\ISO(T,\o )$ which consist of all
the pairs $(r,q)$ of reals such that
$\A_{r}$ and $\A_{q}$ are isomorphic models of $T$.
This is a much studied classification, but
since this classification captures only countable models of the theories,
it is very different from e.g. the classification of the first-order
theories given by S. Shelah in~\cite{Sh}.

Let $\DLO$ be the theory of dense linear orderings without endpoints.
Then in Shelah's classification $\DLO$ is a very complicated theory
but since $\DLO$ is $\o$-categorical $\ISO(\DLO,\o )$
is very simple, Borel and of very low rank.
On the other hand, M. Koerwien has shown
in \cite{Ko} that there is an $\o$-stable NDOP theory $T$ of depth 2
such that $\ISO(T,\o )$ is not Borel.
In Shelah's classification $\o$-stable NDOP theories of depth 2
are considered very simple.

Beside the general interest in the uncountable, the
considerations like the one above, suggest that it
may make sense to try to generalize
the complexity notions to larger sets. For technical reasons,
the classical theory is usually not developed in the space of real
numbers but
in the Baire space (or Cantor space). The
Baire space is not homeomorphic with
the reals but on the level of Borel sets, it is very close to reals (they are Borel-isomorphic).
And there is a very natural way of generalizing the notion of a Baire space:
Suppose that $\k$ is an infinite cardinal
such that $\k^{<\k}=\k$. Notice that $\o$ satisfies this assumption.
(Some work has been done also without this assumption,
but in general it is not even clear what the right notion of
a Borel set is if this assumption is dropped.)
Now we let the generalized Baire space, $\Baire(\k)$, be the set
of all functions $f\colon\k\rightarrow\k$. We make this into a topological space
by letting the basic open sets be the sets
$N_{\n}=\{ f\in \Baire(\k)\vert\ \n\subseteq f\}$, where
$\n$ is a function from some ordinal $\a <\k$ to $\k$.
From basic open sets we get the family of Borel sets, $\Borel(\k)$,
by closing the collection of all basic open sets under
the unions and intersections
of size $\le\k$. Notice that since $\k^{<\k}=\k$,
open sets are $\Borel(\k)$,
the complements of basic open sets are open and thus by an easy induction
one can also see that the collection of $\Borel(\k)$ sets is closed
under complements. Notice in addition, that if $X$ is a finite product of
the space $\Baire(\k)$ equipped with the product topology, then $X$
is homeomorphic with the space $\Baire(\k)$ giving us
the notion of a $\Borel(\k)$ set also to these spaces.
Alternatively one could let the sets
$N_{(\n_{i})_{i<n}}=\{ (f_{i})_{i<n}\in \Baire(\k)^{n}
\vert\ \n_{i}\subseteq f_{i}\ \forall i<n\}$ be the basic open sets,
where for some $\a <\k$ for all $i<n$, $\n_{i}$ is a function from
$\a$ to $\k$,
and then proceed as in the case of the space $Baire (\k)$.
Of course, $\Baire(\o )$ is the usual Baire space and
$\Borel(\o )$ is the usual family of all Borel sets.

As in the case of reals, for theories $T$ the collections
$\ISO(T,\k)$ can be formed and now for uncountable $\k$,
the classification we get for theories is much closer to that of Shelah's than
in the case $\k =\o$. E.g. for suitable $\k$ and countable
first-order theories $T$,
$T$ is shallow and superstable with NDOP and NOTOP iff
$\ISO(T,\k)$ is $\Borel(\k)$ (in particular,
$\ISO(T,\k)$ is $\Borel(\k)$ for the theory $T$ constructed by Koerwien in \cite{Ko}
and $\ISO(\DLO,\k)$ is not $\Borel(\k)$, in fact not even
$\Delta^{1}_{1}(\k)$, see Section~1).
For more on such questions,
see \cite{FHK} and~\cite{HK}.

In \cite{Bl}, D. Blackwell observed that Borel sets can
be equivalently defined
using games. In \cite{MV}, A. Mekler and J. V\"a\"an\"anen
generalized this game version to get $\Borel^{*}(\k)$ sets. The idea behind
this generalization followed the lines of that of M. Karttunen in \cite{Ka},
where she generalized the logics $L_{\o^{+}\o}$ to logics
$M_{\k^{+}\k}$ via semantic games, see Section \ref{sec:Mkk}
(in turn, this generalization was preceded by a rather similar
generalizations of R. Vaught \cite{Va} and V. Harnik and M. Makkai \cite{HM,Ma} and also
by J. Hintikka and V. Rantala \cite{HR}, see the introduction to this volume
by G. Sandu). The topic of this paper are these $\Borel^{*}(\k)$
sets.
In the first two sections we will give a survey on the
already existing theory and in the
third section we will prove the consistency of $\Borel^{*}(\k)\ne \Sigma^{1}_{1}(\k)$ 
(for uncountable $\k$ such that $\k^{<\k}=\k$),
in fact we will show that it is consistent that $\ISO(\DLO,\k)$
is not $\Borel^{*}(\k)$ (in the first section we will sketch a proof
for the fact that in G\"odel's $L$,
$\Borel^{*}(\k)=\Sigma^{1}_{1}(\k)$ for all uncountable regular $\k$,
and so in $L$, also $\ISO(\DLO,\k)$ is $\Borel^{*}(\k)$).

Acknowledgement: The first author wishes to use this opportunity
to express his gratitude for the guidance and financial support
Jaakko Hintikka gave him during the authors graduate studies.

\section{Borel* Versus Some Other Complexity Classes}\label{sec:1}

Throughout this paper, we assume that $\k$ is an infinite cardinal
and $\k^{<\k}=\k$. Note that from this it follows that $\k$ is regular.

The following definition of $\Borel^{*}(\k)$ sets
is from \cite{Bl} in the case $\k =\o$
and from \cite{MV} in the case $\k$ is uncountable.

\begin{Def}\label{def:Eka}
  Let $\l\le\k$ be a cardinal.
  \begin{myEnumerate}
  \item We say that a tree $T$ is a $\k^{+},\l$-tree
    if does not contain chains of length $\l$ and
    its size is $<\k^+$. We say that it is closed if every chain has
    a unique supremum.
  \item We say that a pair $(T,f)$ is a $\Borel^{*}_{\l}(\k)$-code
    if $T$ is a closed $\k^{+},\l$-tree and $f$ is a function
    with domain $T$ such that if $x\in T$ is a leaf, then
    $f(x)$ is a basic open set and otherwise
    $f(x)\in\{\cup ,\cap\}$.
  \item For an element $\n\in \Baire(\k)$ and
    a $\Borel^{*}_{\l}(\k)$-code $(T,f)$, the $\Borel^{*}$-game
    $B^{*}(\n ,(T,f))$ is played as follows.
    There are two players, $\PlOne$ and $\PlTwo$. The game
    starts from the root of $T$. At each move,
    if the game is at node $x\in T$ and $f(x)=\cap$,
    then $\PlOne$ chooses an immediate successor $y$ of $x$
    and the game continues from this $y$. If $f(x)=\cup$,
    then $\PlTwo$ makes the choice.
    At limits the game continues from the (unique)
    supremum of the previous moves.
    Finally, if $f(x)$ is a basic open set,
    then the game ends, and $\PlTwo$ wins if $\n\in f(x)$.
  \item We say that $X\subseteq \Baire(\k)$ is a $\Borel^{*}_{\l}(\k)$ set
    if it has a $\Borel^{*}_{\l}(\k)$-code
    i.e. that there is a $\Borel^{*}_{\l}(\k)$-code
    $(T,f)$ such that for all $\n\in \Baire(\k)$,
    $\n\in X$ iff $\PlTwo$ has a winning strategy in the game
    $B^{*}(\n ,(T,f))$.
  \item A set is $\Borel^{*}(\k)$ set if it is $\Borel^{*}_{\k}(\k)$ set.
    Similarly we say that $(T,f)$ is a $\Borel^{*}(\k)$-code if it is
    a $\Borel^{*}_{\k}(\k)$-code.
  \item We write $\Borel^{*}(\k)$ also for the family of all $\Borel^{*}(\k)$ sets.
    And we will do the same with the other complexity classes.      
  \end{myEnumerate}
\end{Def}

The observation by D. Blackwell, mentioned in the introduction,
generalizes immediately to the following.

\begin{Lemma}\label{lemma:1.2}
  $\Borel(\k)=\Borel^{*}_{\o}(\k)$.
  In particular, $\Borel(\o )=\Borel^{*}(\o )$ and
  $\Borel(\k)\subseteq \Borel^{*}(\k)$.  
\end{Lemma}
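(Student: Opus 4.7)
I will prove the key equality $\Borel(\k)=\Borel^{*}_{\o}(\k)$; the in-particular statements follow at once, since $\Borel^{*}(\o)=\Borel^{*}_{\o}(\o)$ holds by definition, and the inclusion $\Borel^{*}_{\o}(\k)\subseteq\Borel^{*}_{\k}(\k)=\Borel^{*}(\k)$ is immediate because allowing longer chains only enlarges the class of admissible codes.

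For $\Borel^{*}_{\o}(\k)\subseteq\Borel(\k)$, fix a code $(T,f)$. The hypothesis that no chain of $T$ has length $\o$ makes $T$ well-founded, so I recursively assign to each node $x\in T$ a set $B_x$: if $x$ is a leaf, set $B_x=f(x)$; if $f(x)=\cap$, set $B_x=\bigcap\{B_y:y\text{ an immediate successor of }x\}$; dually if $f(x)=\cup$. Because $|T|<\k^{+}$, every node has at most $\k$ immediate successors, and $\Borel(\k)$ is by definition closed under such unions and intersections, so every $B_x$ is in $\Borel(\k)$. A parallel well-founded induction shows that $\PlTwo$ has a winning strategy in $B^{*}(\n,(T,f))$ from position $x$ iff $\n\in B_x$: at a $\cup$-node $\PlTwo$ chooses, so she wins iff some immediate successor is winning for her, matching $\bigcup$; at a $\cap$-node $\PlOne$ chooses, so $\PlTwo$ wins iff every immediate successor is winning, matching $\bigcap$. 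Specializing to the root gives $X=B_{\mathrm{root}}\in\Borel(\k)$.

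For the reverse inclusion, I induct on the generation rank at which a set enters $\Borel(\k)$. A basic open set $U$ has the trivial code consisting of a one-point tree labeled $U$. For $X=\bigcup_{i<\gamma}X_i$ with $\gamma\le\k$ and each $X_i$ coded by some $(T_i,f_i)$ by the induction hypothesis, form $(T,f)$ by taking the disjoint union of the $T_i$ and adjoining a new root labeled $\cup$ whose immediate successors are the roots of the $T_i$; intersections are handled symmetrically. The size of $T$ is at most $\k\cdot\k+1=\k<\k^{+}$; every chain in $T$ consists of the new root together with a finite chain of some $T_i$, hence is still finite; and a finite chain in a tree always has its maximum as unique supremum, so $T$ is closed.

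The one point deserving attention is that the finite-chain condition really does survive the possibly transfinite iteration used to build $\Borel(\k)$. Individual branch lengths in the glued trees can grow without bound as the construction progresses, but the hypothesis forbids only an actual $\o$-sequence of successive extensions, not a uniform finite bound; adjoining a single new root per construction step lengthens each branch by exactly one, so every branch remains finite throughout the induction.
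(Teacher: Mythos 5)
Your proposal is correct and follows exactly the route the paper indicates: the inclusion $\Borel(\k)\subseteq\Borel^{*}_{\o}(\k)$ by induction on the generation of $\Borel(\k)$ sets (gluing codes under a new root), and the reverse inclusion by induction on the rank of the well-founded trees in the codes (assigning $B_x$ to each node and matching winning positions with membership). The paper states both directions as ``easy inductions'' without detail; your write-up supplies those details correctly, including the relevant check that chains stay finite and that the glued tree remains closed.
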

\begin{proof} 
  ``$\subseteq$'': Easy induction on $\Borel(\k)$ sets.
  
  ``$\supseteq$'': Easy induction on the rank of the (well-founded) trees
  in $\Borel^{*}_{\o}(\k)$-codes. 
\end{proof}

\begin{Def}
  \begin{myEnumerate}
  \item $X\subseteq \Baire(\k)$ is $\Sigma^{1}_{1}(\k)$ if it is
    the first projection $\pr_{1}(Y)$ of some closed
    $Y\subseteq \Baire(\k)\times \Baire(\k)$.
  \item $X\subseteq \Baire(\k)$ is $\Delta^{1}_{1}(\k)$ if
    both $X$ and $\Baire(\k)\setminus X$ are $\Sigma^{1}_{1}(\k)$.  
  \end{myEnumerate}
\end{Def}

\begin{RemarkN}\label{remark:Homeo}
  As mentioned in the introduction, for every $n<\o$, the spaces $\Baire(\k)^n$ are homeomorphic to each other, so (i)
  extends to all of them and is equivalent to saying that $X\subset \Baire(\k)^n$
  is $\Sii(\k)$ if it is a projection of a closed $C\subset \Baire(\k)^m$, for some $m>n$.
\end{RemarkN}

In the Lemmas and Theorems below, we show that 
$$\Borel(\k)\subsetneq \Dii(\k)\subseteq \Borel^*(\k)\subseteq \Sii(\k).$$
All these inclusions can be proved in ZFC and the first inclusion is proper.
However it is undecidable in ZFC whether or not the last inclusion is proper 
(Theorems \ref{thm:1.13} and~\ref{thm:DLONotBorelSt}) and it is an open 
problem whether or not it is consistent that the inclusion $\Dii(\k)\subseteq \Borel^*(\k)$
is not proper (Open Question~\ref{que:DiiIsBorel}). 
However in ZFC it can be shown that the inclusion $\Dii(\k)\subsetneq\Sii(\k)$ 
is proper (Lemma~\ref{lemma:1.11}).

\begin{Lemma}\label{lemma:1.4}
  \begin{myEnumerate}
  \item $\Borel(\k)\subseteq\Delta^{1}_{1}(\k)\subseteq\Sigma^{1}_{1}(\k)$.
  \item If $X\subseteq \Baire(\k)$ is
    the first projection of some $\Sigma^{1}_{1}(\k)$ set
    $Y\subseteq \Baire(\k)\times \Baire(\k)$, then it
    is $\Sigma^{1}_{1}(\k)$. In particular, projections
    of $\Borel(\k)$ sets are $\Sigma^{1}_{1}(\k)$  
  \end{myEnumerate}
\end{Lemma}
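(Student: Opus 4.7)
I would dispatch part~(ii) first, since it is short. Given $Y\subseteq\Baire(\k)\times\Baire(\k)$ that is $\Sii(\k)$, Remark~\ref{remark:Homeo} produces a closed $C\subseteq\Baire(\k)^3$ whose projection onto the first two coordinates equals $Y$. Then $\pr_1(Y)$ is the projection of $C$ onto the first coordinate, and a second application of Remark~\ref{remark:Homeo} shows it to be $\Sii(\k)$. The ``in particular'' clause follows once we know (from (i)) that every $\Borel(\k)$ set is $\Sii(\k)$.

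The second inclusion of part~(i) is immediate from the definition of $\Dii(\k)$, so the remaining task is $\Borel(\k)\subseteq\Dii(\k)$. I plan to argue this by induction on the inductive definition of $\Borel(\k)$: it suffices to show that both $\Sii(\k)$ and $\Pii(\k)$ (the class of complements of $\Sii(\k)$ sets) contain every basic open set and are closed under unions and intersections of size at most $\k$. For the base case, each basic open set $N_\n$ is clopen, since its complement is the union of the basic open sets $N_\mu$ with $\dom(\mu)\supseteq\dom(\n)$ that disagree with $\n$ somewhere on $\dom(\n)$. Any clopen $A\subseteq\Baire(\k)$ is trivially $\Sii(\k)$, as $A=\pr_1(A\times\Baire(\k))$ and $A\times\Baire(\k)$ is closed in $\Baire(\k)^2$; the same reasoning applies to its complement.

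The heart of the argument is closure of $\Sii(\k)$ under $\le\k$-sized unions and intersections. Suppose $X_i=\pr_1(C_i)$ with $C_i\subseteq\Baire(\k)^2$ closed for each $i<\k$. For the union I set
$$ C'=\{(f,g,h)\in\Baire(\k)^3:(f,g)\in C_{h(0)}\}. $$
If $(f,g,h)\notin C'$ then $(f,g)\notin C_{h(0)}$, so there is a basic open $U\times V\ni(f,g)$ disjoint from $C_{h(0)}$, and $U\times V\times N_{\{(0,h(0))\}}$ is an open neighborhood of $(f,g,h)$ inside the complement of $C'$. Thus $C'$ is closed and $\bigcup_{i<\k}X_i=\pr_1(C')$ is $\Sii(\k)$ by Remark~\ref{remark:Homeo}. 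For the intersection I fix a bijection $\k\times\k\to\k$ so that any $h\in\Baire(\k)$ naturally codes a sequence $(h_i)_{i<\k}$ in $\Baire(\k)$, and set
$$ C''=\{(f,h)\in\Baire(\k)^2:(f,h_i)\in C_i\text{ for every }i<\k\}. $$
Each section map $(f,h)\mapsto(f,h_i)$ is continuous, so each individual constraint defines a closed subset of $\Baire(\k)^2$; their intersection $C''$ is closed, and $\bigcap_{i<\k}X_i=\pr_1(C'')$ is $\Sii(\k)$. The only real obstacle in this plan is organizing the coding, which is made painless by the standing hypothesis $\k^{<\k}=\k$; otherwise everything reduces to continuity of coordinate projections and the standard fact that preimages of closed sets under continuous maps are closed.
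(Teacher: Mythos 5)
Your proof is correct and follows essentially the same route as the paper's: part (ii) by two applications of Remark~\ref{remark:Homeo}, and part (i) by reducing to closure of $\Sii(\k)$ under $\le\k$-sized unions and intersections, with the same sequence-coding trick for intersections. The only cosmetic difference is in the union case, where you keep the index as a separate third coordinate and invoke the Remark to drop back down in dimension, while the paper packs the index and the witness into a single second coordinate via a coordinate shift; both work equally well.
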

\begin{proof} (i) The second inclusion is trivial, so we prove the first.
  Since the class of $\Delta^{1}_{1}(\k)$ sets is closed under complements,
  by De Morgan's laws, it is enough to show that
  the class of
  $\Delta^{1}_{1}(\k)$ sets is closed under intersections of size $\le\k$.
  For this, it is enough to show that
  the class of $\Sigma^{1}_{1}(\k)$ sets is closed under
  intersections and unions of size $\le\k$. Let us consider
  intersections first.
  
  Let $C_{i}\subseteq \Baire(\k)\times \Baire(\k)$, $i<\k$, be closed
  sets. It is enough to find a closed set
  $C\subseteq \Baire(\k)\times \Baire(\k)$ such that
  $\pr_{1}(C)=\bigcap_{i<\k}\pr_{1}(C_{i})$.
  Let $X_{i}$, $i<\k$, be a partition of $\k$ into
  sets of size $\k$. For all $j<\k$, by $x^{i}_{j}$ we mean
  the $j$th element of $X_{i}$.
  Then every $\n\in \Baire(\k)$ can be seen as coding
  the unique sequence $(\n_{i})_{i<\k}$ of elements of
  $\Baire(\k)$ so that $\n_{i}(j)=\n (x^{i}_{j})$.
  Now we let $C$ be the set of all $(\xi ,\n )\in \Baire(\k)\times \Baire(\k)$
  such that for all $i<\k$, $(\xi ,\n_{i})\in C_{i}$.
  It is routine to check that $C$ is as wanted.
  
  Now for the unions, let $C_{i}\subseteq \Baire(\k)\times \Baire(\k)$,
  $i<\k$, be again closed
  sets. Now it is enough to find a closed set
  $C\subseteq \Baire(\k)\times \Baire(\k)$ such that
  $\pr_{1}(C)=\bigcup_{i<\k}\pr_{1}(C_{i})$. For every
  $\n\in \Baire(\k)$, let $\n^{-}:\k\rightarrow\k$
  be such that $\n^{-}(\a )=\n (\a )$ if $\a\ge\o$
  and otherwise $\n^{-}(\a )=\n (\a +1)$.
  Now we let $C$ be the set of all
  pairs $(\xi ,\n )$ such that $(\xi ,\n^{-})\in C_{\n (0)}$.
  Clearly $C$ is as wanted.
  
  (ii) Now $Y$ is $\Sii(\k)$, so by Remark \ref{remark:Homeo}
  it is a projection of some closed set $C\subset \Baire(\k)^m$
  and so $X$ is the projection of $C$ as well, so the claim follows by applying Remark \ref{remark:Homeo} again.
\end{proof}

Next we look at the relations between $\Borel^{*}(\k)$ and
other complexity classes. The following theorem
(for $\k >\o$) and especially the clever proof we give, are from~\cite{MV}:

\begin{Thm}\label{thm:MekVaa}
  $\Delta^{1}_{1}(\k)\subseteq \Borel^{*}(\k)$.
\end{Thm}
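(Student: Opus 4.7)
Let $X\in\Dii(\kappa)$. Fix closed $C_0,C_1\subseteq\Baire(\kappa)^2$ with $X=\pr_1(C_0)$ and $\Baire(\kappa)\setminus X=\pr_1(C_1)$, and let $T_0,T_1\subseteq(\kappa\times\kappa)^{<\kappa}$ be their tree representations, so that $(\nu,\eta)\in C_i$ iff $(\nu\restl\alpha,\eta\restl\alpha)\in T_i$ for every $\alpha<\kappa$.

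My conceptual starting point is the auxiliary (not yet $\Borel^*$) game $G(\nu)$ in which Player II builds $\eta_0\in\Baire(\kappa)$ one coordinate at a time and Player I builds $\eta_1$, with the play halting at the least $\alpha<\kappa$ where $(\nu\restl\alpha,\eta_0\restl\alpha)\notin T_0$ (Player II loses) or $(\nu\restl\alpha,\eta_1\restl\alpha)\notin T_1$ (Player II wins). Since $X$ and its complement partition $\Baire(\kappa)$, $G(\nu)$ is determined by the ``play a genuine witness" strategy: for $\nu\in X$ Player II fixes some $\eta_0$ with $(\nu,\eta_0)\in C_0$, so every Player I response $\eta_1$ has $(\nu,\eta_1)\notin C_1$, and closedness of $C_1$ forces a failure stage $\alpha^*<\kappa$; the case $\nu\notin X$ is dual. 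In particular every play terminates below $\kappa$, and the winner is determined by whether $\nu\in X$.

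The plan is to realise $G(\nu)$ as a genuine $\Borel^*(\kappa)$-code $(T^*,f)$ following \cite{MV}. Nodes of $T^*$ at level $\alpha$ carry a partial play $(\sigma,\tau)\in(\kappa^\alpha)^2$, connected by alternating $\cap$- and $\cup$-nodes that let Player I extend $\tau$ and Player II extend $\sigma$ by one coordinate (with suprema at limits, making $T^*$ closed). Grafted onto each level is a ``cash-in" $\cup$-node for Player II whose children are the leaves $\{N_\mu:\mu\in\kappa^\alpha,\ (\mu,\tau)\notin T_1\}$: Player II wins by picking $\mu=\nu\restl\alpha$, which is a legal move exactly when $(\nu\restl\alpha,\tau)\notin T_1$. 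When $\nu\in X$, Player II's winning strategy is to extend $\sigma$ along any $\eta_0$ with $(\nu,\eta_0)\in C_0$ and to cash in at the first $\alpha$ at which $\mu=\nu\restl\alpha$ becomes admissible — a stage guaranteed to exist below $\kappa$ by the analysis of $G(\nu)$.

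The main technical obstacle is the prohibition on chains of length $\kappa$ in Definition~\ref{def:Eka}(i). A naive ``continue forever'' branch of the extension-extension-extension tree gives such a $\kappa$-chain, and because $T^*$ is $\nu$-independent one cannot just excise it using $\nu$. The resolution in \cite{MV} is to attach to every continue-edge a well-founded rank, derived from an interleaving of $T_0$ and $T_1$, that strictly decreases along successive extensions; this kills any strictly continuing chain before it reaches length $\kappa$. One then has to verify both that the truncation still accommodates Player II's winning strategy for each $\nu\in X$, and that for $\nu\notin X$ the symmetric use of $T_0$ (via Player I's dual cash-in option) forces Player II into leaves $N_\mu$ with $\mu\ne\nu\restl\alpha$, so that $\nu\notin N_\mu$ and Player II loses. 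This rank bookkeeping is the central delicate step.
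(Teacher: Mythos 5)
Your setup --- closed sets $C_0,C_1$ projecting onto $X$ and its complement, their tree representations $T_0,T_1$, and an auxiliary game in which the players build competing witnesses $\eta_0,\eta_1$ --- is the same starting point as the paper's proof, and you correctly locate the crux: the halting stage of your auxiliary game depends on $\nu$, while the tree of a $\Borel^{*}(\k)$-code must be a single $\k^+,\k$-tree chosen independently of $\nu$. But you do not resolve this crux. You delegate it to ``a well-founded rank, derived from an interleaving of $T_0$ and $T_1$, that strictly decreases along successive extensions'' and explicitly leave the ``rank bookkeeping'' unverified, so the central step of the proof is missing. Worse, the mechanism you sketch cannot work as stated: a strictly decreasing sequence of ordinals is finite, so if every continue-edge strictly decreased an ordinal rank then every chain of your code would be finite, the code would be a $\Borel^{*}_{\o}(\k)$-code, and by Lemma \ref{lemma:1.2} you would have shown $\Dii(\k)\subseteq\Borel(\k)$ --- contradicting the ZFC fact that $\Borel(\k)\ne\Dii(\k)$ for $\k>\o$. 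No ordinal-valued rank can force the game to stop at ``some stage $<\k$'' without forcing it to stop at a finite stage.

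The device the paper actually uses here is a tree comparison, not a rank. It forms the tree $T'$ of triples $(\xi,\nu,\d)$ with $(\xi,\nu)\in T(C)$ and $(\xi,\d)\in T(D)$ (your ``interleaving''), notes that $T'$ has no branch of length $\k$ precisely because $\pr_1(C)$ and $\pr_1(D)$ partition $\Baire(\k)$, and then fixes one $\k^+,\k$-tree $T''$ admitting no order-preserving map into $T'$ (e.g.\ the tree of downward closed chains of $T'$). The punchline is the equivalence: $\xi\in A$ iff $T''\le T(\xi,C)$ iff $\PlTwo$ wins the comparison game $O(T'',T(\xi,C))$, and the $\Borel^{*}(\k)$-code simulates that game. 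Every play then terminates before stage $\k$ for a reason independent of $\xi$: player $\PlOne$ must climb a chain of the fixed tree $T''$, which has no chains of length $\k$. To repair your argument you should replace the decreasing ordinal rank by exactly this device --- label the continue-nodes by a strictly increasing walk up a fixed $\k^+,\k$-tree not embeddable into the interleaved tree --- and then prove both directions of the displayed equivalence.
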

\begin{proof} 
  Let $A\subseteq \Baire(\k)$ be a $\Delta^{1}_{1}(\k)$ set.
  We need to find a $\Borel^{*}(\k)$-code for it.
  
  Let $C,D\subseteq \Baire(\k)\times \Baire(\k)$ be closed sets
  such that $\pr_{1}(C)=A$ and $\pr_{1}(D)=\Baire(\k)\setminus A$.
  
  For closed $B\subseteq \Baire(\k)\times \Baire(\k)$
  by $T(B)$ we denote the set of all pairs $(\xi\rest\a ,\n\rest\a )$
  such that $(\xi ,\n )\in B$ and $\a <\k$.
  For $\xi\in \Baire(\k)$, by
  $T(\xi ,B)$ we mean the set of all $\n:\a\rightarrow\k$, $\a <\k$,
  such that $(\xi\rest\a ,\n )\in T(B)$ and we order $T(\xi ,B)$ by the subset
  relation. Then $T(\xi ,B)$ is a tree.
  $B$ is closed and therefore we have
  
  \begin{itemize}
  \item[$(*)$] $\xi\in \pr_{1}(B)$ iff $T(\xi ,B)$ contains a branch of length~$\k$.
  \end{itemize}
  Thus, since $\pr_{1}(C)$ and $\pr_{1}(D)$ form a partition of $\Baire(\k)$, we have
  \begin{itemize}
  \item[$(**)$] for all $\xi\in \Baire(\k)$, exactly one of
    $T(\xi ,C)$ and $T(\xi ,D)$ contains a branch of length~$\k$.
  \end{itemize}
  
  For trees $T_{0}$ and $T_{1}$, we write $T_{0}\le T_{1}$
  if there is an order preserving $g\colon T_{0}\rightarrow T_{1}$
  (we do not require that $g$ is one-to-one). 
  Note that $T_{0}\le T_{1}$ iff player $\PlTwo$  has a winning
  strategy in the following game $O(T_{0},T_{1})$:
  At each move $\a$, first $\PlOne$ chooses an element $t_{\a}\in T_{0}$
  and then $\PlTwo$ chooses an element $u_{\a}\in T_{1}$.
  For all $\a<\b$ those elements must satisfy $t_\a<u_\a<t_\b$.
  The player who breaks that rule first, loses.
  
  Now let us look at the tree $T'$ which consists of triples
  $(\xi ,\n ,\d)$ such that $(\xi ,\n )\in T(C)$ and $(\xi ,\d )\in T(D)$.
  The ordering is the obvious one: $(\xi ,\n ,\d )\le (\xi',\n',\d')$
  if $\xi\subseteq\xi'$, $\n\subseteq\n'$ and $\d\subseteq\d'$.
  By $(**)$, $T'$ is a $\k^{+},\k$-tree (in particular, it does not contain
  a branch of length $\k$).
  Now let $T$ be any $\k^{+},\k$-tree such that
  $T''\not\le T'$ (e.g. the tree of all downwards closed chains of $T'$).
  
  Then by $(*)$, for all $\xi\in \Baire(\k)$,
  $T''\le T(\xi ,C)$ iff $\xi\in \pr_{1}(C)$ i.e. iff
  $\PlTwo$ has a winning strategy in $O(T'',T(\xi ,C))$.
  Now it is easy to find a $\Borel^{*}(\k)$-code
  $(T,f)$ such that for all $\xi\in \Baire(\k)$,
  the game $B^{*}(\xi ,(T,f))$ simulates the game
  $O(T'',T(\xi ,C))$. Then $(T,f)$ is
  a $\Borel^{*}(\k)$-code for $A$. 
\end{proof}

\begin{Cor}\label{cor:1.6}
  $\Borel(\o )=\Delta^{1}_{1}(\o )=\Borel^{*}(\o )$. \qed
\end{Cor}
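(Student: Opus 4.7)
The plan is to assemble the corollary directly from the three results already in place, observing that when $\k=\o$ the parameter $\l=\o$ in Definition~\ref{def:Eka} coincides with $\k$, so $\Borel^{*}(\o)=\Borel^{*}_{\o}(\o)$. This collapse is what makes the case $\k=\o$ degenerate.

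First I would invoke Lemma~\ref{lemma:1.2}, which gives $\Borel(\k)=\Borel^{*}_{\o}(\k)$ for every admissible $\k$; specializing to $\k=\o$ and using $\Borel^{*}(\o)=\Borel^{*}_{\o}(\o)$ yields $\Borel(\o)=\Borel^{*}(\o)$. Next, from Lemma~\ref{lemma:1.4}(i) I get the inclusion $\Borel(\o)\subseteq\Delta^{1}_{1}(\o)$, and from Theorem~\ref{thm:MekVaa} the inclusion $\Delta^{1}_{1}(\o)\subseteq\Borel^{*}(\o)$. Chaining these,
\[
\Borel(\o)\ \subseteq\ \Delta^{1}_{1}(\o)\ \subseteq\ \Borel^{*}(\o)\ =\ \Borel(\o),
\]
forces all three classes to coincide.

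There is no real obstacle here: the corollary is a purely formal consequence of the earlier lemmas, and the only thing to flag is the identification $\Borel^{*}(\o)=\Borel^{*}_{\o}(\o)$ coming from the definition. No further argument is required.
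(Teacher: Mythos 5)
Your proof is correct and is exactly the argument the paper intends: the corollary is stated with no proof because it follows immediately from Lemma~\ref{lemma:1.2}, Lemma~\ref{lemma:1.4}(i), and Theorem~\ref{thm:MekVaa} by the chain of inclusions you write down, together with the definitional identity $\Borel^{*}(\o)=\Borel^{*}_{\o}(\o)$. Nothing is missing.
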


Neither of the identities in Corollary \ref{cor:1.6} above can be proved in
the case $\k >\o$ (at least not in ZFC).
We start with a straightforward one which was
observed in~\cite{FHK}:

\begin{Lemma}
  If $\k >\o$, then $\Borel(\k)\ne\Delta^{1}_{1}(\k)$.  
\end{Lemma}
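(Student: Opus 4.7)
The plan is to exhibit a set in $\Dii(\k)\setminus\Borel(\k)$, exploiting the uncountability of $\k$. A natural candidate is
$$A=\{f\in 2^{\k\times\k}:f\text{ codes a well-ordering of }\k\text{ of order type exactly }\k\}.$$

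First I would verify $A\in\Dii(\k)$. To see $A\in\Sii(\k)$, write $A=L\cap I$, where $L=\{f:f\text{ codes a linear order}\}$ is closed and
$$I=\{f:\exists g\in\k^\k\text{ an order-isomorphism }(\k,f)\to(\k,<)\}$$
is $\Sii(\k)$ as the first projection of a closed subset of $\Baire(\k)^2$ (Remark~\ref{remark:Homeo}). Lemma~\ref{lemma:1.4} then yields $A\in\Sii(\k)$. For $A^c\in\Sii(\k)$, decompose
$$A^c=L^c\cup D\cup P,$$
where $L^c$ is open, $D=\{f:f\text{ has an infinite }f\text{-descending sequence}\}$ is $\Sii(\k)$ by projecting out the witnessing $\omega$-sequence, and $P=\{f:\text{some }\alpha<\k\text{ has }\ge\k\text{ many }f\text{-predecessors}\}$ is $\Sii(\k)$ by projecting out $\alpha$ together with an injection $\k\hookrightarrow\{\b:\b<_f\alpha\}$. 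Lemma~\ref{lemma:1.4} gives $A^c\in\Sii(\k)$, so $A\in\Dii(\k)$.

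Next I would show $A\notin\Borel(\k)$ by a pigeonhole/boundedness argument. By Lemma~\ref{lemma:1.2}, any $\Borel(\k)$ set admits a $\Borel^*_\omega(\k)$-code whose underlying tree is well-founded with only finite chains, so the tree has a well-defined ordinal rank $\beta<\k^+$. Assign to each $\alpha\in[\k,\k^+)$ a well-order $f_\alpha\in 2^{\k\times\k}$ of $\k$ of order type $\alpha$. There are $\k^+$ such ordinals, but the $B^*$-game of rank $\beta$ on a fixed code induces an equivalence on inputs with at most $\k^{<\k}=\k$ classes (this is a standard $M_{\k^+\k}$-style game-equivalence on $(\k,f_\alpha)$, whose number of classes is controlled by $\beta<\k^+$). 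Pigeonhole then forces two ordinals $\k$ and $\alpha'\ne\k$ in $[\k,\k^+)$ to land in the same class; a $\Borel^*_\omega(\k)$-code of rank $\beta$ cannot distinguish $f_\k$ from $f_{\alpha'}$, contradicting $f_\k\in A$ and $f_{\alpha'}\notin A$.

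The hard part is the correspondence between $\Borel^*_\omega$-rank and the relevant game-equivalence on well-orders of $\k$: one must verify that the $B^*$-game on a rank-$\beta$ tree is indeed insensitive to a sufficiently coarse structural equivalence, and then bound the number of equivalence classes by $\k$. This back-and-forth analysis, linking $\Borel(\k)$-rank to the infinitary logic $M_{\k^+\k}$ that will be discussed in Section~\ref{sec:Mkk}, is the main technical content; see~\cite{FHK} for the detailed execution.
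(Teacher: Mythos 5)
Your first half is fine as far as it goes, but the proposal has a fatal gap: the candidate set $A$ is itself $\Borel(\k)$ when $\k>\o$, so it cannot witness $\Borel(\k)\ne\Dii(\k)$. The point is that the uncountability of $\k$ (together with $\k^{<\k}=\k$) trivializes exactly the conditions you are relying on. Since $\k^\o\le\k^{<\k}=\k$, the set $D$ of codes with an infinite descending sequence is a union of $\k$ many sets of the form $\{f\mid \forall n\ g(n+1)<_f g(n)\}$ for $g\in\k^\o$; each of these is determined by countably many coordinates of $f$ and is therefore clopen (any set determined by $<\k$ coordinates is clopen, by regularity of $\k$). Hence $D$ is \emph{open} and well-foundedness is a closed condition. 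Similarly, ``$\a$ has at least $\k$ many $f$-predecessors'' is the $\k$-intersection over $\g<\k$ of the open conditions ``the $\g$-th element of $\operatorname{pred}_f(\a)$ exists'', so $P$ is $\Borel(\k)$. Thus $A=L\cap D^c\cap P^c$ is $\Borel(\k)$. (Equivalently: $A$ is closed under isomorphism and definable in $L_{\k^+\k}$ --- well-foundedness needs only an $\o$-block of quantifiers and ``every point has $<\k$ predecessors'' needs only a disjunction over $\g<\k$ of $\g$-blocks --- so $A$ is Borel by Theorem~\ref{thm:LEV}. The pathology of $\operatorname{WO}$ at $\k=\o$ does not survive to uncountable $\k$; the genuinely non-Borel phenomena at $\k$ live at cofinality/club level, as with $\CUB_\o(\k)$.)

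Consequently the second half of your argument cannot be repaired: the ``game-equivalence with at most $\k$ classes'' induced by a $\Borel^*_\o(\k)$-code of rank $\b$ does not exist in the form you need, because the leaves of a code carry \emph{arbitrary} basic open sets, which already separate any two distinct points $f_\k\ne f_{\a'}$ at rank $1$; there is no back-and-forth invariance to exploit unless one first restricts to isomorphism-closed definability in $M_{\k^+\k}$, which is a different (and strictly finer) notion. The paper avoids all of this by not looking for a concrete witness at all: it builds a $\Dii(\k)$ universal parametrization $(\xi,\n)\mapsto$ ``$\xi$ lies in the Borel set coded by $\n$'' --- using that for $\k>\o$ the set of well-founded code-trees is $\Borel(\k)$ (the same phenomenon that sinks your example, here turned to advantage) and Gale--Stewart determinacy of the clopen games $B^*(\xi,(t_\n,f_\n))$ to get both the set and its complement $\Sii(\k)$ --- and then diagonalizes: $B=\{\n\mid(\n,\n)\in A\}$ is $\Dii(\k)$ but can have no $\Borel^*_\o(\k)$-code. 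You should switch to that diagonalization, or at minimum choose a candidate set that is not $L_{\k^+\k}$-definable.
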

\begin{proof}
  Recall that by Lemma \ref{lemma:1.2} $\Borel(\k)=\Borel^{*}_{\o}(\k)$.
  Now choose any reasonable coding of $\Borel^{*}_{\o}(\k)$-codes $(t,f)$
  to functions $\n :\k\rightarrow\k$ so that
  if we write $(t_{\n},f_{\n})$ for the pair coded by
  $\n$, every $\Borel^{*}_{\o}(\k)$-code $(t,f)$ is
  $(t_{\n},f_{\n})$ for some $\n$ and the set
  of those $\n$ which code a $\Borel^{*}_{\o}(\k)$-code is closed, or at least $\Borel(\k)$
  (here we need that $\k >\o$, because the property that $t_{\n}$
  is well-founded is not $\Borel(\k)$ if $\k=\o$). Also choose a coding
  for strategies of $\PlOne$ in the games $B^{*}(\xi ,(t,f))$.
  In both cases `almost' any codings works -- excluding the pathological
  ones.
  
  Now, for non-pathological codings,
  it is easy to see that the
  set of all $(\xi ,\n ,\d )$ such that
  $\xi ,\n ,\d\in \Baire(\k)$, $\n$ codes a $\Borel^{*}_{\o}(\k)$-code
  and $\d$ codes a winning strategy of $\PlOne$ in the game
  $B^{*}(\xi ,(t_{\n},f_{\n}))$ is $\Borel(\k)$.
  But then by the Gale-Stewart theorem (and Lemma~\ref{lemma:1.4}~(ii)),
  if we let $A$ be the set of pairs $(\xi ,\n )$ such that
  $\xi ,\n\in \Baire(\k)$, $\n$ codes a $\Borel^{*}_{\o}(\k)$-code
  and $\xi$ is not in the $\Borel(\k)$ set coded by
  $(t_{\n},f_{\n})$, then $A$ is $\Sigma^{1}_{1}(\k)$.
  Similarly one can see that
  the complement of $A$ is also $\Sigma^{1}_{1}(\k)$ and thus
  $A$ is $\Delta^{1}_{1}(\k)$.
  But then also $B=\{\n\in \Baire(\k)\vert\ (\n ,\n )\in A\}$
  is $\Delta^{1}_{1}(\k)$, since $B=\Delta\cap A$ where
  $\Delta =\{ (\n ,\n)\vert\ \n\in \Baire(\k)\}$ is clearly closed
  (and thus $\Delta^{1}_{1}(\k)$, see the proof of Lemma \ref{lemma:1.4}).
  However, $B$ can not be
  $\Borel(\k)$ because obviously it can not
  have a $\Borel^{*}_{\o}(\k)$-code (this is the usual
  Cantor style diagonalization, see the proof of
  Lemma \ref{lemma:1.11} (ii) where we do everything in more detail). 
\end{proof}

However, the other identity, namely $\Delta^{1}_{1}(\k)=\Borel^{*}(\k)$,
is more complicated. In fact,

\begin{Open}\label{que:DiiIsBorel}
   Is $\Delta^{1}_{1}(\k)=\Borel^{*}(\k)$
   together with $\k =\k^{<\k}>\o$ consistent?
\end{Open}

This question is related to Question \ref{que:BorMkk}.
As can be understood from Section~\ref{sec:Mkk},
there is a connection between $\Borel^*(\k)$ sets and
classes of models definable in the language $M_{\k^+\k}$. However the connection is not as close as one might think
e.g. they are different in~$L$, see the discussion after Open Question~\ref{que:BorMkk}.

In any case, one can try to use the intuition
provided by the theory of the language $M_{\k^+\k}$
to understand $\Borel^*(\k)$ sets and this intuition suggests that
the answer to \ref{que:DiiIsBorel} is no:
it seems very unlikely that $\Borel^{*}(\k)$ could be closed
under taking complements (which it would be, if $\Delta^{1}_{1}(\k)=\Borel^{*}(\k)$),
because $M_{\k^+\k}$  is not closed under the negation as is shown
in an unpublished manuscript by T. Huuskonen from the 90's.
But often the proofs from the theory of $M_{\k^{+}\k}$
do not work in the context of $\Borel^{*}(\k)$ and this is also
the case with Huuskonen's proof:
one of the problems in using it
in the context of $\Borel^{*}(\k)$, is
that the models which witness that the sentence does not
have a negation in $M_{\k^{+}\k}$ are necessarily
of size~$>\k$.

The question of the
consistency of $\Delta^{1}_{1}(\k)\ne \Borel^{*}(\k)$ is
easier to handle. In fact,
for every uncountable regular $\k$,
$\Delta^{1}_{1}(\k)\ne \Borel^{*}(\k)$
in $L$ (see Lemma \ref{lemma:1.11}~(ii) and Theorem~\ref{thm:1.13})
and the same holds for an uncountable $\k$ with $\k^{<\k}=\k$ 
also in the model we construct in Section~\ref{sec:2}.
As a preparation, let
us look at the way of seeing this.

\begin{Def}\label{def:1.9}
  \begin{myEnumerate}
  \item We let $\CUB_{\o}(\k)$ be the set of
    all $\n\in \Baire(\k)$ such that the set
    $\{\a <\k\vert\ \n (\a )>0\}$ contains an $\o$-cub set i.e.
    an unbounded set $X\subseteq\k$ which is
    $\o$-closed i.e. if $\a_{i}\in X$ for all $i<\o$, then
    $\cup_{i<\o}\a_{i}\in X$.
  \item A set $X\subseteq \Baire(\n )$ is co-meager if it is
    an intersection of $\k$ many dense and open subsets of $\Baire(\k)$.    
  \item $Y\subseteq \Baire(\k)$ has the property of Baire
    if there are an open set $U$ and a co-meager set $X$ such that
    $Y\cap X=U\cap X$.  
  \end{myEnumerate}
\end{Def}

In \cite{Ha}, A. Halko showed that the classical result that $\Borel(\o)$ sets have
the property of Baire generalizes to 
$\Borel(\k)$ for uncountable $\k=\k^{<\k}$.

The following lemma can be found in \cite{FHK}; item
(iii) was independently known also to P. L\"ucke and P. Schlicht.

\begin{Lemma}\label{lemma:1.10}
  Suppose $\k >\o$.
  \begin{myEnumerate}
  \item $\CUB_{\o}(\k)$ is $\Borel^{*}(\k)$.
  \item $\CUB_{\o}(\k)$ does not have the property of Baire.
  \item It is consistent that every $\Delta^{1}_{1}(\k)$ set has \label{lemma:1.10:itemiii}
    the property of Baire and at the same time $\k =\k^{<\k}>\o$.    
  \item It is consistent that $\k=\k^{<\k}>\o$ and
    $\Delta^{1}_{1}(\k)\ne \Borel^{*}(\k)$.  
  \end{myEnumerate}
\end{Lemma}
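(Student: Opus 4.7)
The plan is to attack the four parts in turn, with (iv) a direct corollary of (i)--(iii).

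\textbf{(i).} Build an explicit $\Borel^*(\k)$-code $(T,f)$ of height $\o+1$. The associated game alternates: starting at a $\cap$-node, Player I plays $\a_n<\k$ and Player II (at a $\cup$-node) responds with $\b_n\ge\a_n$, for $\o$ rounds. At the unique $\o$-th level node (a $\cup$-node), Player II picks $\mu:\gamma+1\to\k$ with $\mu(\gamma)>0$, where $\gamma=\sup_n\b_n$; the leaf is labeled by the basic open $N_\mu$, so Player II wins iff $\n(\gamma)>0$. The tree has size $\k^{<\o}\cdot\k=\k$ and no chains of length $\k$, so $(T,f)$ is a valid $\Borel^*(\k)$-code. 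If $\n\in\CUB_{\o}(\k)$, fix an $\o$-cub $C\subseteq\{\a:\n(\a)>0\}$ and let Player II answer $\b_n=\min(C\setminus\a_n)$; by $\o$-closure $\gamma\in C$, hence $\n(\gamma)>0$. Conversely, if $\s:\k^{<\o}\to\k$ is a winning strategy for Player II, the set $C_\s$ of $\o$-closure points of $\s$ of cofinality $\o$ is an $\o$-cub contained in $\{\a:\n(\a)>0\}$: for $\gamma\in C_\s$, any $\o$-cofinal $(\a_n)\subseteq\gamma$ yields $\sup_n\s(\a_0,\ldots,\a_n)=\gamma$, and Player II winning this play forces $\n(\gamma)>0$.

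\textbf{(ii).} $\CUB_{\o}(\k)$ is \emph{tail-invariant}: changing $\n$ on a bounded initial segment $[0,\a_0)$ does not affect membership, since any witnessing $\o$-cub survives intersection with the final segment $[\a_0,\k)$. For $\eta,\eta':\a\to\k$ of equal length, the homeomorphism $N_\eta\to N_{\eta'}$ sending $\n\mapsto\eta'\cup(\n\rest[\a,\k))$ preserves $\CUB_{\o}(\k)$. Combined with $\k^+$-completeness of the meager ideal (from $\k^{<\k}=\k$), a standard argument then shows that any tail-invariant set with the property of Baire must be either meager or co-meager in $\Baire(\k)$. It then suffices to rule out both alternatives; this is done via explicit constructions (as in \cite{FHK}) exploiting the closure properties of the $\o$-cub filter to exhibit non-meager subsets of both $\CUB_{\o}(\k)$ and its complement.

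\textbf{(iii) and (iv).} Part (iii) is a forcing consistency carried out in \cite{FHK}: starting from a ground model with an inaccessible $\mu>\k$ and GCH, a Solovay-style Levy collapse of $\mu$ to $\k^+$ yields an extension in which $\k=\k^{<\k}>\o$ holds and every $\Sii(\k)$ set (hence every $\Dii(\k)$ set) has the property of Baire. Part (iv) is then immediate: in this extension, $\CUB_{\o}(\k)\in\Borel^*(\k)$ by (i) but $\CUB_{\o}(\k)\notin\Dii(\k)$ by (ii), so $\Dii(\k)\subsetneq\Borel^*(\k)$.

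The main obstacle lies in (ii): isolating a clean 0--1 law for tail-invariant property-of-Baire sets and constructing the non-meager witnesses inside $\CUB_{\o}(\k)$ and inside its complement. Part (i) is a direct game construction, and parts (iii)--(iv) follow from standard forcing technology once (ii) is in hand.
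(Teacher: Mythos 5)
Your part (i) fills in the ``easy'' simulation that the paper leaves implicit and is essentially the paper's own argument: the paper reduces membership in $\CUB_{\o}(\k)$ to Player $\PlTwo$ having a winning strategy in the length-$\o$ game $CG_{\o}(A_\nu)$ and then codes that game into a $\Borel^{*}(\k)$-tree, exactly as you do. (One small imprecision: $\PlTwo$ does not ``win iff $\nu(\gamma)>0$'' at the leaf $N_\mu$; she wins iff $\nu\supseteq\mu$, and she can \emph{arrange} this by choosing $\mu=\nu\restriction(\gamma+1)$ exactly when $\nu(\gamma)>0$. This is harmless.) Part (iv) is also derived exactly as in the paper. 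The problems are in (ii) and (iii).

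In (ii), the $0$--$1$ law for tail-invariant sets with the Baire property is a legitimate (if unnecessary) detour, but it transfers the entire content of the statement to the two claims you leave unproved: that $\CUB_{\o}(\k)$ is non-meager and that its complement is non-meager. Those two claims \emph{are} the proof of (ii); citing them ``as in [FHK]'' is not an argument. The paper establishes them directly and bypasses the $0$--$1$ law altogether: given dense open $X_i$ ($i<\k$) and a nonempty open $U$, build an increasing chain $\nu_i\colon\a_i\to\k$ with $N_{\nu_0}\subseteq U$, $N_{\nu_{j+1}}\subseteq X_j$, and at limit $i$ put $\nu_i=\bigl(\bigcup_{j<i}\nu_j\bigr)\cup\{(\bigcup_{j<i}\a_j,0)\}$. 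The union $\nu$ lies in $U\cap\bigcap_{i<\k}X_i$, and $\{\a\mid\nu(\a)=0\}$ contains a club; since every club meets every $\o$-cub (interleave and take the supremum of an $\o$-chain), $\nu\notin\CUB_{\o}(\k)$. The dual construction (putting the value $1$ at limits) places a point of $\CUB_{\o}(\k)$ inside every comeager set. Some such fusion construction must appear explicitly for (ii) to be a proof.

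In (iii) your statement is actually false, not merely unproved: there is no model of $\k=\k^{<\k}>\o$ in which every $\Sii(\k)$ set has the property of Baire, because by part (i) together with Lemma \ref{lemma:1.11}(i) the set $\CUB_{\o}(\k)$ is itself $\Sii(\k)$, and by part (ii) it provably lacks the property of Baire. So the Solovay-style claim ``every $\Sii(\k)$ set (hence every $\Dii(\k)$ set) has the property of Baire'' cannot be what the forcing achieves; the consistency result is genuinely only about $\Dii(\k)$ sets. The paper obtains it (deferring details to [FHK]) by adding $\k^{+}$ many Cohen subsets of $\k$; no inaccessible cardinal and no Levy collapse are involved.
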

\begin{proof} 
  (i) It is easy to see that the set
  $A_{\n}=\{\a <\k\vert\ \n (\a )>0\}$ contains an $\o$-cub set
  iff the player $\PlTwo$ has a winning strategy in
  the game $CG_{\o}(A_{\n})$: the game lasts $\o$ moves.
  At each move $n<\o$, first the player $\PlOne$  chooses an ordinal $\a_{n}\in\k$
  and then $\PlTwo$ chooses an ordinal $\b_{n}\in\k$ such that
  $\b_{n}>\a_{n}$. In the end $\PlTwo$ wins if $\cup_{n<\o}\b_{n}\in A_{\n}$.
  
  But now one
  just needs to find a $\Borel^{*}(\k)$-code $(t,f)$ such that
  the $\Borel^{*}$ game $B^{*}(\n ,(t,f))$ simulates the game
  $CG_{\o}(A_{\n})$. This is easy.

  (ii) Suppose $U$ is open and $X_{i}$, $i<\k$, are open and dense.
  We need to show that $\CUB_{\o}(\k)\cap X\ne U\cap X$
  where $X=\bigcap_{i<\k}X_{i}$. We assume that $U\ne\es$,
  the other case is similar. Now choose an increasing sequence
  $\n_{i}\colon\a_{i}\rightarrow\k$, $\a_{i}<\k$, so that
  \begin{myAlphanumerate}
  \item if $i=0$, then let $\n_{i}$ be such that $N_{\n_{i}}\subseteq U$
    (for $N_{\n_{i}}$, see the introduction),
  \item if $i=j+1$, then let $\n_{i}$ be such that it extends $\n_{j}$
    and $N_{\n_{j}}\subseteq X_{j}$,
  \item if $i$ is limit, then let
    $\n_{i}=(\bigcup_{j<i}\n_{j})\cup\{ (\bigcup_{j<i}\a_{j},0)\}$.    
  \end{myAlphanumerate}
  Now if we let $\n =\cup_{i<\k}\n_{i}$, $\n\in X\cap U$
  but $\n\not\in \CUB_{\o}(\k)$.

  (iii) The statement is forced by adding $\k^+$ many
  Cohen subsets to $\k$,
  for details see~\cite{FHK}.

  (iv) Immediate by (i)-(iii). 
\end{proof}

Let us now turn to the relations between the class $\Sigma^{1}_{1}(\k)$
and the other complexity classes studied above.
The proof of the following lemma is a straightforward generalization from
the case $\k =\o$ and in the case $\k =\o$,
the item (ii) is the famous result of M. Suslin from~\cite{Su}.

\begin{Lemma}\label{lemma:1.11}
  \begin{myEnumerate}
  \item $\Borel^{*}(\k)\subseteq\Sigma^{1}_{1}(\k)$.
  \item $\Delta^{1}_{1}(\k)\ne\Sigma^{1}_{1}(\k)$.
  \end{myEnumerate}  
\end{Lemma}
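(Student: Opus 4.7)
For part (i), the plan is to code strategies and show that the set of winning-strategy-pairs is $\Borel(\k)$, whence its projection is $\Sii(\k)$ by Lemma~\ref{lemma:1.4}(ii). Given a $\Borel^{*}(\k)$-code $(T,f)$ with $|T|\le\k$, after fixing a coding I identify a strategy $\xi$ for $\PlTwo$ with an element of $\Baire(\k)$; such a $\xi$ specifies an immediate successor $\xi(t)$ at each $\cup$-node $t\in T$. Let $C\subseteq\Baire(\k)\times\Baire(\k)$ be the set of pairs $(\n,\xi)$ such that $\xi$ is a winning strategy for $\PlTwo$ in $B^{*}(\n,(T,f))$. The key observation is that $\xi$ is winning iff for every leaf $t\in T$ that is reachable when $\PlTwo$ follows $\xi$, we have $\n\in f(t)$. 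For each fixed leaf $t$, reachability depends only on $\xi$'s values at the $\cup$-ancestors of $t$, which form a chain of length $<\k$ (since $T$ has no chain of length $\k$); this makes the condition ``$t$ is not reachable via $\xi$, or $\n\in f(t)$'' open in $(\n,\xi)$. Intersecting over the $\le\k$ leaves of $T$ yields a $\Borel(\k)$ set $C$, and $\n\in X \iff \exists\xi\,(\n,\xi)\in C$.

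For part (ii), the plan is the classical Suslin diagonalization argument, transferred to $\Baire(\k)$ using $\k^{<\k}=\k$. First I build a universal $\Sii(\k)$ set. Every closed subset of $\Baire(\k)\times\Baire(\k)$ is the set of branches of a subtree of $\k^{<\k}\times\k^{<\k}$, and since $\k^{<\k}=\k$ such trees are coded by elements $a\in\Baire(\k)$; write $S_a$ for the tree coded by $a$. Then
\[
V=\{(a,\xi,\n)\in\Baire(\k)^{3} : \forall\alpha<\k,\ (\xi\rest\alpha,\n\rest\alpha)\in S_a\}
\]
is closed, being an intersection of $\k$ clopen conditions. Projecting out the $\xi$-coordinate and applying Remark~\ref{remark:Homeo} and Lemma~\ref{lemma:1.4}(ii) produces a $\Sii(\k)$ set $U\subseteq\Baire(\k)^{2}$, and by construction every $\Sii(\k)$ subset of $\Baire(\k)$ is a section $U_a=\{\n:(a,\n)\in U\}$ for some $a\in\Baire(\k)$.

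Set $D=\{\n:(\n,\n)\in U\}$. As the preimage of $U$ under the continuous diagonal map $\n\mapsto(\n,\n)$, we have $D\in\Sii(\k)$. If $\Baire(\k)\setminus D$ were also $\Sii(\k)$, pick $a_0$ with $\Baire(\k)\setminus D=U_{a_0}$; then $a_0\in D \iff (a_0,a_0)\in U \iff a_0\in U_{a_0} \iff a_0\notin D$, a contradiction. Hence $D\in\Sii(\k)\setminus\Dii(\k)$, which suffices.

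The main technical care will be needed in part (i): to check that ``reachability of the leaf $t$ via $\xi$'' really is a closed condition on $\xi$ in the product topology, one must use that the $\cup$-ancestors of $t$ form a chain of length $<\k$ (so that only a bounded amount of information about $\xi$ is involved), and then invoke $\k^{<\k}=\k$ so that the bounded information fits into basic open neighborhoods of $\Baire(\k)$. Once that is done, parts (i) and (ii) are both straightforward; the diagonalization in (ii) is routine after the universal $\Sii(\k)$ set has been set up.
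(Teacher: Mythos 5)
Your proposal is correct and follows essentially the same route as the paper: in (i) you code strategies of $\PlTwo$, observe that the set of pairs $(\n,\xi)$ with $\xi$ a winning strategy is $\Borel(\k)$ (the paper arranges its coding so that this set is outright closed, but via Lemma~\ref{lemma:1.4}(ii) your Borel version suffices), and project; in (ii) you build a universal $\Sigma^1_1(\k)$ set and diagonalize, exactly as the paper does, differing only in that you parametrize closed sets by trees in $\k^{<\k}\times\k^{<\k}$ where the paper parametrizes their open complements by enumerations of basic open sets. Your extra care in (i) about reachability of a leaf depending on $<\k$ many coordinates (hence a bounded set, by regularity of $\k$) is exactly the point the paper waves at with ``almost any coding works.''
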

\begin{proof} (i) Let $(t,f)$ be a $\Borel^{*}(\k)$-code.
  Again one can quite freely choose the way of coding
  strategies of player $\PlTwo$ in the game $B^{*}(\xi ,(t,f))$ to
  functions $\n :\k\rightarrow\k$ and find out that
  the set of those pairs $(\xi ,\n )\in \Baire(\k)\times \Baire(\k)$
  for which $\n$ codes a winning strategy of $\PlTwo$ in the game
  $B^{*}(\xi ,(t,f))$ is closed. And thus the set whose $\Borel^{*}(\k)$-code
  $(t,f)$ is, is $\Sigma^{1}_{1}(\k)$.
  
  (ii) Here we give the easiest proof i.e. we diagonalize, but we will
  return to this question after this proof.
  Let us fix a coding for open sets of $\Baire(\k)\times \Baire(\k)$:
  fix a one-to-one and onto function
  $\pi \colon\k\rightarrow B$, where
  $B$ is the set of all pairs $(f,g)$ functions
  $f,g\colon\a\rightarrow\k$, $\a <\k$. Then we think of $\n\in \Baire(\k)$
  as the code of the open set $U_{\n}=\bigcup_{\a<\k}N_{\n (\a )}$,
  see the alternative way of defining the topology
  on $\Baire(\k)\times \Baire(\k)$ in the introduction.
  Now every open set has a (non-unique) code and
  every $\n\in \Baire(\k)$ codes some open set.
  Now every $\n\in \Baire(\k)$ is also a code for a $\Sigma^{1}_{1}(\k)$ set,
  namely to the set $A_{\n}$ which consists of those
  $\xi\in \Baire(\k)$ such that for some
  $\d\in \Baire(\k)$, $(\xi ,\d )\not\in U_{\n}$.
  Notice that now every $\Sigma^{1}_{1}(\k)$ set has a code.
  
  Now let $A$ be the set of those $\n\in \Baire(\k)$ such that
  $\n\in A_{\n}$. It is easy to see that 
  the set $B=\{ (\n ,\d )\in \Baire(\k)\times \Baire(\k)\vert
  \ (\n ,\d )\not\in U_{\n }\}$ is closed and thus
  $A=\pr_{1}(B)$ is $\Sigma^{1}_{1}(\k)$.
  This set $A$ is not $\Delta^{1}_{1}(\k)$ because if it is,
  then $C=\Baire(\k)\setminus A$ has a code $\n$ which means that
  $\n\in C$ iff $\n\in A_{\n}$ iff $\n\in A$ iff $\n\not\in C$,
  a contradiction. 
\end{proof}

There are also more concrete examples of $\Sigma^{1}_{1}(\k)$ sets
that are not $\Delta^{1}_{1}(\k)$:
Fix a vocabulary $L$ so that it consists of one binary
predicate symbol $\le$ (for simplicity) and
fix also a one-to-one and onto
function $\pi:\k^{2}\rightarrow\k$. Then we let every
$\n\in \Baire(\k)$ code the following $L$-structure $\A_{\n}$:
The universe of $\A_{\n}$ is $\k$ and for all $(x,y)\in \k^{2}$,
the pair $(x,y)$ is in the interpretation of $\le$ if
$\n(\pi (x,y)))\ge 1$. Notice that now every
$L$-structure with universe $\k$ has a code (not unique).
Then, as in the introduction, we let $\ISO(\DLO,\k)$ consists
of those pairs $(\xi ,\n )\in \Baire(\k)\times \Baire(\k)$
such that $\A_{\xi}$ and $\A_{\n}$ are isomorphic
models of the theory $\DLO$. Clearly, $\ISO(\DLO,\k)$
is $\Sigma^{1}_{1}(\k)$. 

By strengthening the methods behind the proof of Theorem~\ref{thm:MekVaa}
and using results from \cite{HT}, it was shown in \cite{MV}, that

\begin{Fact}\label{fact:1.12}
  If $\k >\o$, then $\ISO(\DLO,\k)$ is not $\Delta^{1}_{1}(\k)$.  
\end{Fact}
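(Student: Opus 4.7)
The plan is to prove Fact~\ref{fact:1.12} by exhibiting a Borel reduction from a fixed $\Sigma^{1}_{1}(\k)$ set that is not $\Delta^{1}_{1}(\k)$ to $\ISO(\DLO,\k)$. Such a source set $U$ is provided by Lemma~\ref{lemma:1.11}~(ii). The goal is a Borel function $\Phi\colon \Baire(\k) \to \Baire(\k)\times\Baire(\k)$ such that $\xi \in U$ iff $\Phi(\xi) \in \ISO(\DLO,\k)$. Since Borel preimages of $\Delta^{1}_{1}(\k)$ sets are $\Delta^{1}_{1}(\k)$, were $\ISO(\DLO,\k)$ to be $\Delta^{1}_{1}(\k)$, so would be $U$, a contradiction.

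For the construction, I would exploit the tree presentation from the proof of Theorem~\ref{thm:MekVaa}. Writing $U = \pr_{1}(C)$ for a closed $C \subseteq \Baire(\k)\times\Baire(\k)$, we have $\xi \in U$ iff $T(\xi,C)$ contains a branch of length $\k$. Following the standard recipe that converts a tree into a dense linear order --- lexicographically order the nodes of $T(\xi,C)$ after splicing a copy of $\Q$ between each node and its immediate successors to force density --- I would associate to each $\xi$ a DLO $\A_{\xi}$ of size $\k$ whose large-scale order type reflects the branch structure of $T(\xi,C)$. A suitable ``canonical'' partner $\B_{\xi}$ is chosen so that $\A_{\xi}$ and $\B_{\xi}$ are isomorphic precisely when $T(\xi,C)$ has a $\k$-branch. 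The coding $\xi \mapsto (\A_{\xi},\B_{\xi})$, seen as an element of $\Baire(\k)\times\Baire(\k)$ under the fixed encoding of $L$-structures, is visibly Borel.

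The genuine difficulty lies in the backward direction: showing that $\A_{\xi} \cong \B_{\xi}$ forces $T(\xi,C)$ to have a $\k$-branch, ruling out the many ``accidental'' isomorphisms that DLOs tend to admit. This is exactly where the strengthening of the Mekler--V\"a\"an\"anen method and the results of~\cite{HT} enter. The idea is to analyse any putative isomorphism $\A_{\xi}\to\B_{\xi}$ via an EF-style game of length $\k$ on the two DLOs, and to show that a winning strategy in that game can be unravelled into a winning strategy for $\PlTwo$ in the tree-comparison game $O(T'',T(\xi,C))$ of the proof of Theorem~\ref{thm:MekVaa}; by the choice of $T''$ made there, this forces the existence of the desired $\k$-branch. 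The combinatorial analysis in~\cite{HT} of DLOs of size $\k$ supplies the rigidity needed to make this EF-to-branch translation go through, and this step is where I expect essentially all of the technical work to concentrate.
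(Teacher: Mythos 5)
The paper does not prove Fact~\ref{fact:1.12}; it cites \cite{MV} and \cite{HT}, and the argument indicated there runs in the opposite direction from yours. One first strengthens the proof of Theorem~\ref{thm:MekVaa}: if $\ISO(\DLO,\k)$ were $\Dii(\k)$, then the tree-extraction argument (the trees $T'$, $T''$ and the game $O(T'',T(\xi,C))$, applied now with the tree of partial isomorphisms between $\A_\eta$ and $\A_\xi$ in the role of $T(\xi,C)$) produces a \emph{single} $\k^+,\k$-tree $t$ with no branch of length $\k$ such that for all $\eta,\xi$ coding models of $\DLO$, $\A_\eta\cong\A_\xi$ iff $\PlTwo$ wins $\EF_t(\A_\eta,\A_\xi)$. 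One then invokes the non-structure theorem of \cite{HT}: for every such $t$ there exist non-isomorphic models of $\DLO$ of size $\k$ which are $\EF_t$-equivalent. Contradiction. This is exactly the ingredient the present paper reuses in Claim~\ref{claim:C5}.

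Your plan is instead a hardness-by-reduction argument, and it has a genuine gap at the point you yourself flag. First, your appeal to \cite{HT} is inverted: the results there supply an \emph{abundance} of accidental $\EF$-equivalences between non-isomorphic dense linear orders, i.e.\ they destroy, rather than provide, the rigidity your backward direction needs. Second, that backward direction (an isomorphism $\A_\xi\cong\B_\xi$ yields a $\k$-branch of $T(\xi,C)$) is not a routine unravelling: a lexicographic/$\Q$-splicing coding of $T(\xi,C)$ into a DLO admits precisely the accidental isomorphisms you mention, and eliminating them requires coding stationary or club-guessing information into the orders. That is the content of the much stronger $\Sii(\k)$-completeness results for $\cong_{\DLO}$, which need substantially more machinery and are known only under extra hypotheses on $\k$ beyond $\k^{<\k}=\k>\o$; so even if completed, your route would not obviously yield the fact in the stated generality. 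Note also that $T''$ in Theorem~\ref{thm:MekVaa} is manufactured from a putative $\Dii(\k)$ presentation of the \emph{target} set, so it is not available as a tool for analysing your source set $U$ in the way your last paragraph suggests. (Your outer frame is fine: Borel preimages of $\Dii(\k)$ sets are $\Dii(\k)$, so a Borel reduction from the set of Lemma~\ref{lemma:1.11}~(ii) would indeed suffice --- but constructing it is where all the difficulty lies, and it is not the route taken for this fact.)
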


In fact, this holds for a large
class of first-order theories, see~\cite{FHK,MV}. For more on these questions, see \cite{FHK,HK}.

We finish this section with the following result from~\cite{FHK}:

\begin{Thm}\label{thm:1.13}
  If $V=L$ and $\k >\o$ is regular, then
  $\Borel^{*}(\k)=\Sigma^{1}_{1}(\k)$.  
\end{Thm}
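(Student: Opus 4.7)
The plan is to prove the nontrivial inclusion $\Sii(\k)\subseteq\Borel^{*}(\k)$ in $L$; the other direction is Lemma~\ref{lemma:1.11}(i). Fix $A=\pr_{1}(C)$ for some closed $C\subseteq\Baire(\k)\times\Baire(\k)$. By observation $(*)$ in the proof of Theorem~\ref{thm:MekVaa}, $\xi\in A$ iff the tree $T(\xi,C)$ carries a branch of length $\k$. I would try to follow the template of that proof: find a $\k^{+},\k$-tree $T^{*}$ together with a $\Borel^{*}(\k)$-code whose game simulates the comparison game $O(T^{*},T(\xi,C))$.

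The implication ``$T(\xi,C)$ has a $\k$-branch $\Rightarrow T^{*}\le T(\xi,C)$'' is automatic for any $T^{*}$ of size $\le\k$ with all chains of length $<\k$: the rank function order-embeds $T^{*}$ into $(\k,<)$, and hence into any $\k$-branch of $T(\xi,C)$. The substantive direction is the converse: when $T(\xi,C)$ is itself a $\k^{+},\k$-tree one needs $T^{*}\not\le T(\xi,C)$. Unlike the $\Delta^{1}_{1}$ case, no single universal $T^{*}$ can suffice here, since the identity map witnesses $T^{*}\le T^{*}$. My plan is therefore to let $T^{*}=T^{*}(\xi)$ depend on $\xi$ in a suitably local way.

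The use of $V=L$ enters at this step. Using the canonical $<_{L}$-enumeration of $L_{\k^{+}}$ together with $\diamondsuit_{\k}$, both of which hold in $L$ for every regular uncountable $\k$, one would build $T^{*}(\xi)$ level by level and diagonalise against each candidate order-preserving map $g\colon T^{*}(\xi)\to T(\xi,C)$ listed by $<_{L}$: at a level where the diamond sequence correctly guesses $g$ on the already-built portion of $T^{*}(\xi)$, add a new node that $g$ cannot be extended to hit. Stationarity of the correct guesses ensures every such $g$ is eventually defeated, so $T^{*}(\xi)\not\le T(\xi,C)$ whenever $T(\xi,C)$ has no $\k$-branch.

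The last step mirrors the closing paragraph of the proof of Theorem~\ref{thm:MekVaa}: package the parametrised game $O(T^{*}(\xi),T(\xi,C))$ as a single $\Borel^{*}(\k)$-code in which $\PlOne$'s moves progressively reveal $\xi\rest\alpha$, specifying the relevant parts of both $T^{*}(\xi)$ and $T(\xi,C)$, while $\PlTwo$'s moves simulate the comparison game, the leaves being labelled by the appropriate basic open sets. The main obstacle is the $\diamondsuit$-guided construction of $T^{*}(\xi)$: it must be of size $\le\k$ with all chains of length $<\k$, yet depend on $\xi$ only through bounded initial segments so that the assembled code is a legitimate $\Borel^{*}(\k)$-code. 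The $\Sigma_{1}$-absoluteness in $L$ of both $\diamondsuit_{\k}$ and the $<_{L}$-enumeration is what should make this local dependence go through.
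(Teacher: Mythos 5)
Your plan diverges from the paper's proof in an essential way, and the point you yourself flag at the end --- that $T^{*}(\xi)$ must vary with $\xi$ while everything assembles into one $\Borel^{*}(\k)$-code --- is not a technical loose end but a genuine obstruction that neither $\diamondsuit_{\k}$ nor absoluteness resolves. By Definition~\ref{def:Eka} a $\Borel^{*}(\k)$-code has a \emph{fixed} tree, and $\xi$ enters the game only through the payoff at the leaves. In the proof of Theorem~\ref{thm:MekVaa} this is harmless because $\PlOne$ climbs a fixed tree $T''$ and only $\PlTwo$'s moves need to be audited at the leaf. In your game $\PlOne$ climbs $T^{*}(\xi)$. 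If you let $\PlOne$ range over a fixed pool of ``potential'' nodes and audit his moves only at the leaves, the code tree acquires chains of length $\k$ (nothing stops $\PlOne$ from playing a strictly increasing $\k$-sequence of illegitimate nodes, and a $\Borel^{*}$-game cannot terminate a play early on the basis of $\xi$). If instead every $T^{*}(\xi)$ is realized as a downward-closed suborder of one fixed $\k^{+},\k$-tree $T^{**}$, then $T^{*}(\xi)\le T^{**}$ for all $\xi$, and the characterization fails for any $\xi\notin A$ with $T^{**}\le T(\xi,C)$; such $\xi$ exist once $A$ is complicated enough (e.g.\ a universal $\Sigma^{1}_{1}(\k)$ set, for which the branchless trees among the $T(\xi,C)$ are cofinal in the ordering $\le$). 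This is the same obstruction that rules out a single universal $T^{*}$, pushed one level up. There is also a metamathematical reason the sketch cannot close: the only consequences of $V=L$ you actually use are $\diamondsuit_{\k}$ and a $\k^{+}$-listing of the candidate embeddings, and these are available in models where the theorem fails. For instance, in the model of Theorem~\ref{thm:DLONotBorelSt} with $\k=\o_2$ one has $2^{\o_1}=\o_2$ (from $\k^{<\k}=\k$) and hence $\diamondsuit_{\o_2}$ by Shelah's theorem, yet there $\Borel^{*}(\k)\ne\Sii(\k)$.

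The paper exploits $V=L$ quite differently, via condensation rather than guessing. One writes $\xi\in A$ as $L_{\theta}\models\exists x\,\phi(x,\xi,\k,f,g)$ with $\theta=\k^{++}$ --- here $V=L$ is irreplaceable, since it guarantees that every potential witness $x\in\Baire(\k)$ already lies in $L_{\theta}$ --- and defines $C_{\xi}\subseteq\k$ as the set of $\a$ for which some $L_{\b}$, with $\a$ regular in $L_{\b}$, satisfies $\exists x\,\phi(x,\xi\rest\a,\a,f\rest\a,g\rest\a)$. A Skolem-hull and Mostowski-collapse argument shows $\xi\in A$ iff $C_{\xi}$ contains an $\o$-cub set; since membership of $\a$ in $C_{\xi}$ depends only on $\xi\rest\a$, the $\o$-length cub game $CG_{\o}(C_{\xi})$ of Lemma~\ref{lemma:1.10}(i) is simulated by a single code with a fixed tree. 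Your opening reductions (that $\xi\in A$ iff $T(\xi,C)$ has a $\k$-branch, and that a $\k$-branch absorbs any $\k^{+},\k$-tree via the height function) are correct, but the paper replaces the entire tree-comparison step by this reflection argument, and I do not see how your version avoids the packaging problem above.
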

\begin{proof} Let $A\subseteq \Baire(\k)$ be $\Sigma^{1}_{1}(\k)$.
  We need to find a $\Borel^{*}(\k)$-code for it.
  Let $f,g$ be functions with domain $\k$ such that
  \begin{myItemize}
  \item[($\a$)] for all $i<\k$, there is $\g <\k$ such that
    both $f(i)$ and $g(i)$ are
    functions from $\g$
    to $\k$,
  \item[($\b$)] $A$ is the first projection of
    the set
    $$(\Baire(\k)\times \Baire(\k))\setminus\bigcup_{i<\k}N_{(f(i),g(i))}.$$
  \end{myItemize}
  Let $\phi (x,y,z,w,u)$ be the formula of set theory which says that
  \begin{myAlphanumerate}
  \item $x$ and $y$ are functions from $z$ to $z$,
  \item for all $i\in z$,
    either $w(i)$ is not a (proper) subset of
    $y$ or $u(i)$ is not a (proper) subset of $x$
    (i.e. for all $i\in z$ either for all $j\in z$, $(i,y\rest j)\not\in w$
    or for all $j\in z$, $(i,x\rest j)\not\in u$).
  \end{myAlphanumerate}

  Let $\theta =\k^{++}$. Now for all $\xi\in \Baire(\k)$,
  $\xi\in A$ iff $L_{\theta}\models\exists x\phi (x,\xi ,\k ,f,g)$.
  Notice also that $\phi$ is very absolute.
  
  Let $T$ be (e.g.) the theory of $L_{\theta}$ and for all
  $\xi\in \Baire(\k)$, let $C_{\xi}$ be the set
  of all $\a <\k$ such that there is $\b >\a$ with the following properties:
  \begin{myEnumerate}
  \item $\a$ is regular in $L_{\b}$,
  \item $L_{\b}$ is a model of $T$,
  \item $L_{\b}\models\exists x\phi (x,\xi\rest\a ,\a ,f\rest\a ,g\rest\a )$.
  \end{myEnumerate}

  \noindent
  Notice that whether $\a\in C_{\xi}$ or not, depends only on $\xi\rest\a$.

  \begin{Claim}\label{claim:1.13.1}
    For all $\xi\in \Baire(\k)$,
    $\xi\in A$ iff $C_{\xi}$ contains an $\o$-cub set (see Definition \ref{def:1.9}).
  \end{Claim}
  \begin{proofVOf}{Claim \ref{claim:1.13.1}} ``$\Rightarrow$'': Suppose $\xi\in A$.
    For all $\a <\k$, let
    $SH(\a\cup\{ \xi ,\k ,f,g\} )$ be the Skolem closure of
    the set $\a\cup\{ \xi ,\k ,f,g\}$ under the definable
    Skolem functions in $L_{\theta}$ (among the realizations, the Skolem functions choose
    the least one in the definable well-ordering of $L$).
    Let $D$ be the set of those $\a <\k$ such that
    $SH(\a\cup\{ \xi ,\k ,f,g\} )\cap\k=\a$. It is routine to check
    that $D$ contains an $\o$-cub set, in fact it is closed and unbounded.
    But $D\subseteq C_{\xi}$, because if $\a\in D$, then
    the Mostowski collapse of $SH(\a\cup\{ \xi ,\k ,f,g\} )$ is
    $L_{\b}$ for some $\b$ and this $\b$ witnesses that $\a\in C_{\xi}$.
    
    ``$\Leftarrow$'': Suppose $C_{\xi}$ contains an $\o$-cub set $C$.
    For a contradiction, suppose that $\xi\not\in A$ i.e.
    $L_{\theta}\models\neg\exists x\phi (x,\xi ,\k ,f,g)$.
    Following the idea from the above,
    let $D\subseteq\k$ be the set of those
    $\a <\k$ such that $SH(\a\cup\{ \xi ,\k ,f,g,C\} )\cap\k=\a$.
    Again $D$ is closed and unbounded and
    if $\a\in D$ is of cofinality $\o$, then $\a\in C$
    (because $C\cap\a$ is unbounded in $\a$ and $C$ is $\o$-closed).
    
    Let $\a$ be the least limit point
    of $D$. Then $\a\in C\subseteq C_{\xi}$ and $\a\cap D$ has order type $\o$.
    Let $\b^{*}$ be such that $L_{\b^{*}}$ is the Mostowski collapse
    of $SH(\a\cup\{ \xi ,\k ,f,g,C\} )$ and let $\b$ witness
    the fact that $\a\in C_{\xi}$.
    Since 
    $$L_{\b}\models\exists x\phi (x,\xi\rest\a ,\a ,f\rest\a ,g\rest\a )
       \text{ but }L_{\b^{*}}\models\neg\exists x\phi (x,\xi\rest\a ,\a ,f\rest\a ,g\rest\a ),$$
    $\b >\b^{*}$ (the element that witnesses the truth
    of the existential claim can not be in $L_{\b^{*}}$)
    and since $L_{\b}\models T$, $\b$ is also a limit ordinal. Thus since
    $D\cap\a$ is definable in $L_{\b^{*}}$, $D\cap\a\in L_{\b}$.
    Since the order type of $D\cap\a$ is $\o$ and
    $L_{\b}\models T$, it is easy to see that $L_{\b}$
    thinks that $\a$ has cofinality $\o$. This is a contradiction
    since by the definition of $C_{\xi}$, $L_{\b}$
    should think that $\a$ is regular. 
  \end{proofVOf}
  
  Now to find the required $\Borel^{*}(\k)$-code for $A$
  it is enough to find a $\Borel^{*}(\k)$-code $(t,h)$
  such that the game $B^{*}(\xi ,(t,h))$ simulates
  the game $CG_{\o}(C_{\xi})$. This is easy
  (recall that the question of whether
  $\a\in C_{\xi}$ or not depends only on~$\xi\rest\a$). 
\end{proof}

\section{Topological Complexity Classes and $M_{\k^+\k}$}\label{sec:Mkk}

The complexity hierarchy of subsets of $\Baire(\k)$ is reflected by
the definability hierarchy in model theory. 
Fix a coding of models of size $\k$ into elements of $\Baire(\k)$ via
some well-behaved coding $\eta\mapsto \A_\eta$ (for example as the one defined in Section~\ref{sec:1} in connection with
Fact~\ref{fact:1.12}).
We say that $B\subset \Baire(\k)$ is \emph{closed under isomorphism}, if $\eta\in B$
implies $\xi\in B$ for all $\xi$ with $\A_\eta\cong \A_\xi$ and \emph{definable in the logic $L$},
if there exists a sentence $\f\in L$ such that $B=\{\eta\mid \A_\eta\models \f\}$.
Obviously, if $L$ is any reasonable logic and $B$ is definable in $L$, then $B$ is closed under isomorphism.

\begin{Thm}\label{thm:LEV}
  Suppose $B\subset\Baire(\k)$ is closed under isomorphism. Then it is $\Borel(\k)$ if and only if 
  it is definable in $L_{\k^+\k}$.
\end{Thm}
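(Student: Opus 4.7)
My plan is to prove each direction by induction.

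For the direction $(\Leftarrow)$ (definable in $L_{\k^+\k}$ implies $\Borel(\k)$), I would strengthen the statement so that it applies to formulas, not just sentences: for every $\f(\bar x)\in L_{\k^+\k}$ with tuple $\bar x$ of length $\alpha<\k$ and every $\bar a\in\k^\alpha$, the set $\{\eta : \A_\eta\models \f(\bar a)\}$ lies in $\Borel(\k)$. The induction on $\f$ is then routine: atomic formulas with parameters from $\k$ define basic clopen sets (via the coding $\n\mapsto\A_\n$); negation corresponds to complementation; conjunctions and disjunctions of size $\le\k$ correspond to $\k$-intersections and $\k$-unions; and a quantifier block of length $\alpha<\k$ corresponds to a union/intersection indexed by $\k^\alpha=\k^{<\k}=\k$ many assignments. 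Closure of $\Borel(\k)$ under $\k$-sized Boolean operations finishes this direction, and the resulting set is automatically invariant under isomorphism.

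For the harder direction $(\Rightarrow)$, the key observation is that for each $\nu:\alpha\to\k$ with $\alpha<\k$, the basic open set $N_\nu$ itself is not iso-closed, but its isomorphism-invariant hull is definable by the existential $L_{\k^+\k}$-formula $\exists(x_i)_{i<\alpha}\,\psi_\nu((x_i)_{i<\alpha})$, where $\psi_\nu$ is the conjunction of fewer than $\k$ atomic and negated-atomic formulas encoding the finite-type atomic diagram determined by $\nu$. Given an iso-closed $B\in\Borel(\k)$, I would induct on the generation of $B$ from basic open sets under $\k$-unions, $\k$-intersections and complementation. For iso-closed open $B=\bigcup_{i<\k}N_{\nu_i}$, iso-closure lets me replace each $N_{\nu_i}$ by its iso-hull, yielding a $\k$-disjunction of existential $L_{\k^+\k}$-formulas. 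Higher Borel operations translate directly into $L_{\k^+\k}$-connectives.

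The main obstacle here will be that intermediate sets in the Borel generation of $B$ need not themselves be iso-closed, so a naive induction ``only on iso-closed Borel sets'' breaks down the moment we complement an open set: the complement of an iso-closed open is iso-closed and closed, but it is not directly a $\k$-intersection of iso-hulls of basic opens. The standard remedy, following the L\'opez-Escobar/Vaught framework for the $\k=\o$ case, is to carry the induction through \emph{all} Borel sets together with a partial-diagram parameter $\nu:\alpha\to\k$, assigning to each pair $(B,\nu)$ a formula $\f_{B,\nu}(\bar x)$ with $|\bar x|=\alpha$ free variables that captures membership in $B$ conditional on $\bar x$ realizing the atomic type encoded by $\nu$. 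Negation then makes sense inside the enlarged framework, and the iso-closure of the original $B$ is invoked only at the end to existentially close the tuple (with $\alpha=0$) and produce the desired $L_{\k^+\k}$-sentence.
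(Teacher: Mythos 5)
First, a caveat: the paper does not actually prove Theorem \ref{thm:LEV}; it only cites Vaught's proof of the $\k=\o_1$ case under CH and asserts that it generalizes to any $\k=\k^{<\k}$. So your sketch has to be measured against that standard L\'opez--Escobar/Vaught argument. Your $(\Leftarrow)$ direction is fine and essentially complete: strengthening to formulas with $<\k$ free variables and parameters from $\k$, and using $|\k^\alpha|\le\k^{<\k}=\k$ to turn a quantifier block into a $\k$-sized union or intersection, is exactly the routine induction.

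The $(\Rightarrow)$ direction has a genuine gap. You correctly locate the obstacle (intermediate sets in the Borel generation are not iso-closed, and complementation breaks the naive induction), but the remedy you describe is not the one that works, and the part you delegate to ``the L\'opez--Escobar/Vaught framework'' is precisely the whole content of the proof. A formula $\f_{B,\nu}(\bar x)$ that ``captures membership in $B$ conditional on $\bar x$ realizing the atomic type encoded by $\nu$'' is not a well-defined inductive invariant: membership of $\eta$ in an arbitrary Borel set is not determined by any $<\k$-sized fragment of the atomic diagram, which is exactly why the complement step fails in the first place. The correct inductive object is the Vaught transform (category quantifier): $\f_{B,s}(\bar x)$ must express ``for comeager-many bijections of $\k$ extending the partial assignment $\bar x\mapsto s$, the translated code lies in $B$.'' With that definition, complementation is handled by the duality between the comeager and non-meager transforms, and $\le\k$-sized intersections are handled by the generalized Baire category theorem (an intersection of $\k$ many dense open sets is dense), which is the one place where $\k^{<\k}=\k$ is essentially used and which your sketch never invokes. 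A smaller but symptomatic slip: at the end one does not ``existentially close the tuple''; for invariant $B$ one takes the empty condition and uses invariance to identify $B$ with its transform $B^{*\varnothing}$. As written, your induction does not get past the first complement.
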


When $\k=\o$, this result is known as the Lopez-Escobar theorem (see e.g. \cite{Ke}) and for $\k=\o_1$ it has been proved by R. Vaught
under CH, see \cite{Va}. Vaught's proof generalizes to any infinite $\k=\k^{<\k}$.

The following definition is due to M. Karttunen \cite{Ka}:
\begin{Def}
  Let $\l$ and $\k$ be cardinals. The language 
  $M_{\l\k}$ is then defined to be the set 
  of pairs $(t,\ll)$ consisting of a closed $\l\k$-tree $t$ (see Definition~\ref{def:Eka}) 
  and a labeling function 
  $$\ll\colon t\to a\cup\{\land,\lor\}\cup \{\exists x_i\mid i<\k\}\cup \{\forall x_i\mid i<\k\}$$ 
  where $a$ is the set of basic formulas, i.e. atomic and negated atomic formulas.
  The labeling $\ll$ satisfies also the following conditions:
  \begin{myEnumerate}
  \item If $x\in t$ is a leaf, then $\ll(t)\in a$.
  \item If $x\in t$ has exactly one immediate successor 
    then $\ll(t)$ is either $\exists x_i$ or $\forall x_i$ for some $i<\k$.
  \item Otherwise $\ll(t)\in\{\lor,\land\}$.
  \item If $x<y$, $\ll(x)\in \{\exists x_i,\forall x_i\}$ and $\ll(y)\in \{\exists x_j,\forall x_j\}$, then $i\ne j$.
  \end{myEnumerate}

  The truth of $M_{\l\k}$ is defined in terms of a semantic game. Let $(t,\ll)$ be a
  sentence and let $\A$ be a model. In the semantic game $S(\f,\A)=S(t,\ll,\A)$ for $M_{\l\k}$ 
  the players start at the root of $t$ and climb up one step at a time. 
  Suppose that they are at the element $x\in t$. If $\ll(x)=\lor$, then player $\PlTwo$ chooses an immediate successor of $x$,
  if $\ll(x)=\land$, then player $\PlOne$ chooses an immediate successor of $x$.
  If $\ll(x)=\forall x_i$ then player $\PlOne$ picks an element $a_i\in\A$ and if $\ll(x)=\exists x_i$ then player $\PlTwo$ picks $a_i\in \A$
  and they move to the immediate successor of $x$. If they come to a limit, they move to the unique supremum. If $x$ is
  a maximal element of $t$, then they plug the elements $a_i$ in place of the corresponding free variables in the basic formula
  $\ll(x)$ and if the resulting sentence is true, then player $\PlTwo$ wins.
  $\A\models (t,\ll)$ if and only if $\PlTwo$ has a winning strategy in the semantic game.
\end{Def}

One immediately sees some similarity with the definition of the $\Borel^*(\k)$ sets and that maybe there is some 
hope to prove a result similar to Theorem~\ref{thm:LEV}. Employing this intuition, 
the following was shown in \cite{FHK} (the key idea is due to S. Coskey and P. Schlicht):

\begin{Thm}\label{thm:MkkBor}
  If $B\subset \Baire(\k)$ is $\Borel^*(\k)$ and closed under isomorphism, then
  it is definable in $\Sii(M_{\k^+\k})$.
\end{Thm}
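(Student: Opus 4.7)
The plan is to translate a $\Borel^{*}(\k)$-code $(T,f)$ for $B$ into a $\Sii(M_{\k^{+}\k})$ sentence $\Phi$ by using the second-order existential prefix to fix an enumeration of the universe of $\A_{\eta}$ and then running the Borel*-game as a semantic game in the matrix. Since $B$ is closed under isomorphism, $\Phi$ should assert, informally, that there exists an enumeration $a\colon\k\to|\A|$ such that $\PlTwo$ wins the Borel*-game $B^{*}(\eta_{a},(T,f))$, where $\eta_{a}\in\Baire(\k)$ is the code of $\A$ obtained by reading off the atomic diagram of $\A$ through $a$ as in the coding discussion of Section~\ref{sec:1}.

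To realize this, in the $\Sii$ prefix I would existentially quantify a binary relation intended as the graph of $a$, together with auxiliary coding that allows each ordinal $\b<\k$ to be ``named'' by a basic formula. One clean option is to quantify a sequence of unary predicates $\{U_{\b}\}_{\b<\k}$ with intended meaning $U_{\b}(x)\leftrightarrow x=a(\b)$; alternatively, one quantifies a single linear order $<^{*}$ on $|\A|$ of order type $\k$ and defines each ``$\b$-th element'' inside $M_{\k^{+}\k}$ by a subformula of depth $\b+1$. The axioms that the quantified relations genuinely code a bijection and the correct naming data are expressible in $L_{\k^{+}\k}\subseteq M_{\k^{+}\k}$ using conjunctions of size $\le\k$, and can be conjoined to the matrix.

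The matrix tree $(t,\ll)$ is then obtained from $T$ by relabeling each $\cap$-node as $\land$ (so $\PlOne$ chooses) and each $\cup$-node as $\lor$ (so $\PlTwo$ chooses), and expanding each leaf with label $N_{\nu}$, where $\nu\colon\a\to\k$ and $\a<\k$, into a $\land$-node with $\a$ children; the $\b$-th child is a short subtree checking the single coordinate equation $\eta_{a}(\b)=\nu(\b)$. Writing $\b=\pi(x,y)$, this equation reduces under the model-coding to an atomic condition on $a(x)$ and $a(y)$, implemented by a small subtree with two $\exists$-quantifiers binding witnesses $u,v$ for $a(x),a(y)$ followed by a conjunction of the basic formulas $U_{x}(u)$, $U_{y}(v)$, and $u\le v$ (or its negation, as dictated by $\nu(\b)$). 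The size and chain conditions go through: $|T|\le\k$ and each expansion has size $<\k$, so $|t|<\k^{+}$; and branches of $T$ gain only $O(1)$ depth, so $t$ has no chain of length~$\k$.

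The main obstacle is exactly this leaf expansion step: because basic formulas in $M_{\k^{+}\k}$ are plain first-order atomic (or negated-atomic) formulas, there is no direct way to refer to a specific ordinal $\b<\k$ or to the specific element $a(\b)$, and so the auxiliary $\Sii$-quantified coding is essential — this is where the argument needs the most care, together with verifying that the variable indexing condition (iv) in the definition of $M_{\k^{+}\k}$ can be respected along every branch. Once this is set up, correctness reduces to a straightforward transfinite induction on the height of $T$: at every subtree, $\PlTwo$'s winning strategies in the Borel*-subgame and in the corresponding $M_{\k^{+}\k}$-semantic subgame correspond bijectively, with the base case being the expanded leaves, which by construction hold in the augmented structure iff $\eta_{a}$ lies in the relevant basic open set. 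Hence $\A_{\eta}\models\Phi$ iff some enumeration $a$ witnesses $\eta_{a}\in B$, which by closure of $B$ under isomorphism is equivalent to $\eta\in B$.
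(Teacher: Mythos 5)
The paper does not actually prove Theorem~\ref{thm:MkkBor}; it is quoted from \cite{FHK} with the key idea credited to Coskey and Schlicht. Judged against that intended construction, your proposal is the right one: existentially quantify (second order) an enumeration of the model of order type $\k$, use it to read off a code $\eta_a$, and convert the $\Borel^*(\k)$-code $(T,f)$ into the syntax tree of an $M_{\k^+\k}$-sentence whose semantic game simulates $B^*(\eta_a,(T,f))$, with the leaf gadgets doing the translation between basic open sets and basic formulas. The overall shape and the use of isomorphism-closure at the end are correct.

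Three points in your write-up need repair, none fatal. First, the leaf check: the coding $\eta\mapsto\A_\eta$ only records whether $\eta(\pi(x,y))$ is $0$ or $\ge 1$, so the exact equation $\eta_a(\b)=\nu(\b)$ is not recoverable from the structure when $\nu(\b)\ge 2$, and your two-way case split ``$u\le v$ or its negation'' silently assumes $\nu$ is $\{0,1\}$-valued. You must first normalize: since $B$ is closed under isomorphism it is in particular closed under recoding, so you may pass to the canonical $\{0,1\}$-valued code and replace each leaf label $N_\nu$ accordingly (declaring leaves where $\nu$ takes a value $\ge 2$ false) before translating. Second, ``transfinite induction on the height of $T$'' is not available: a $\k^+,\k$-tree for $\k>\o$ need not be well-founded (it merely has no chain of length $\k$), so it has no ordinal height. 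The correctness argument should instead be a direct two-way translation of $\PlTwo$'s strategies, using the projection of the semantic game on $t$ onto $B^*(\eta_a,(T,f))$ obtained by forgetting the witness moves inside the leaf gadgets; no induction is needed. Third, your first option for the prefix ($\k$ many predicates $U_\b$) presupposes that $\Sii(M_{\k^+\k})$ allows $\k$ many second-order quantifiers; the safe route is your second option, a single well-order $<^*$, where ``$x$ is the $\b$-th element'' costs a quantifier chain of length about $\b<\k$ --- harmless for the chain condition on $t$ by regularity of $\k$, but this is exactly the verification you should carry out rather than defer.
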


The converse of \ref{thm:MkkBor} is consistent:

\begin{Thm}[$V=L$]\label{thm:MkkinL} 
  Let $\k>\o$ be regular.
  If $B\subset\Baire(\k)$ is definable in $\Sii(M_{\k^+\k})$, then $B$ is $\Borel^*(\k)$.
\end{Thm}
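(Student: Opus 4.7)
The plan is to reduce to Theorem~\ref{thm:1.13}: it suffices to show that every $\Sii(M_{\k^+\k})$-definable subset of $\Baire(\k)$ is in $\Sii(\k)$, after which $V=L$ and regularity of $\k>\o$ give the desired $\Borel^*(\k)$ conclusion automatically via Theorem~\ref{thm:1.13}.

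Unpacking the definition, a $\Sii(M_{\k^+\k})$-definable $B$ has the form
$B=\{\eta\mid (\A_\eta,X_1,\dots,X_n)\models\phi\text{ for some relations }X_1,\dots,X_n\}$,
where $\phi=(t,\ll)$ is a sentence of $M_{\k^+\k}$ in the expanded vocabulary. Using $\k^{<\k}=\k$ I would first code any admissible choice of $\bar X$ by a single auxiliary $\xi\in\Baire(\k)$, so that $B$ becomes the first projection of
$$D\;=\;\{(\eta,\xi)\in\Baire(\k)^2\mid (\A_\eta,X_\xi)\models\phi\}.$$
Thus the whole task is to realize $D$ as a projection of a closed subset of $\Baire(\k)^3$ (identified with $\Baire(\k)$ via the standard homeomorphism from Remark~\ref{remark:Homeo}).

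To do this, I would fix once and for all a natural coding of strategies of $\PlTwo$ in the semantic game $S(\phi,\A)=S(t,\ll,\A)$ by elements $\sigma\in\Baire(\k)$. Since $t$ is a closed $\k^+,\k$-tree and every position carries at most $\k$ many legal moves, $\k^{<\k}=\k$ guarantees such a coding exists with the set of $\sigma$'s that actually code a total strategy being closed. The heart of the argument is then the following observation, proved by a straightforward transfinite induction along $t$:
$$E\;=\;\{(\eta,\xi,\sigma)\mid \sigma\text{ codes a winning strategy of }\PlTwo\text{ in }S(\phi,(\A_\eta,X_\xi))\}$$
is closed in $\Baire(\k)^3$. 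The reason is that a strategy $\sigma$ restricts the set of plays to at most $\k^{<\k}=\k$ many plays (indexed by the choices of $\PlOne$ at $\land$- and $\forall$-nodes), each of which is a branch of $t$ of length $<\k$; each such play terminates in a basic formula $\ll(x)$ with an assignment of elements of the universe to its free variables, and whether this basic formula holds in $(\A_\eta,X_\xi)$ is a clopen condition on $(\eta,\xi)$. So ``$\sigma$ is winning'' is a conjunction of $\le\k$ many clopen conditions, hence closed; and the condition ``$\sigma$ is a well-defined strategy'' is closed as well. Then $D=\pr_{\{1,2\}}(E)$, so $B=\pr_1(D)=\pr_1(E)$ is $\Sii(\k)$ by (the many-variable form of) Lemma~\ref{lemma:1.4}(ii).

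The only step that requires a little care is the verification that $E$ really is closed: one needs to be sure that the coding of strategies is compatible with the limit stages in $t$ (supremum moves), and that the enumeration of plays consistent with $\sigma$ is uniformly nice enough to present ``winning'' as a $\k$-sized intersection of clopen conditions rather than, say, a $\Pii$ condition involving a quantifier over arbitrary $\Baire(\k)$-many $\PlOne$-strategies. This is where $\k^{<\k}=\k$ is used crucially: plays have length $<\k$, so the set of plays consistent with $\sigma$ has size $\le\k$ and can be enumerated outright. Granted this, the conclusion follows by invoking Theorem~\ref{thm:1.13} in $L$ to upgrade $B\in\Sii(\k)$ to $B\in\Borel^*(\k)$.
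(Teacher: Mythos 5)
Your proposal is correct, and its skeleton coincides with the paper's: reduce to showing that every $\Sii(M_{\k^+\k})$-definable set is $\Sii(\k)$, absorb the second-order existential quantifier into a projection over an auxiliary coordinate $\xi$, and then invoke Theorem~\ref{thm:1.13} under $V=L$. Where you diverge is in how the remaining step (``$M_{\k^+\k}$-definable implies $\Sii(\k)$'') is discharged. The paper factors it through two already-stated results: Theorem~\ref{thm:DefMisBor}, which converts the sentence $(t,\ll)$ into a genuine $\Borel^*(\k)$-code $(t^*,h)$ by unravelling the semantic game into a tree of partial runs with open sets at the leaves, and then Lemma~\ref{lemma:1.11}(i), which codes strategies of $\PlTwo$ in the resulting $\Borel^*$-game and observes that the winning-strategy relation is closed. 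You instead perform the strategy-coding directly on the semantic game $S(t,\ll,(\A_\eta,X_\xi))$ and show the set $E$ of winning strategies is closed in one step; this is in effect the composition of the proofs of those two results, and your justification (plays consistent with a fixed $\sigma$ number at most $\k^{<\k}=\k$, each of length $<\k$, each terminating in a basic formula whose truth is a clopen condition on $(\eta,\xi)$ --- so one quantifies over plays, not over $\PlOne$-strategies) is exactly the right point. What the paper's factorization buys is economy and a reusable intermediate statement (Theorem~\ref{thm:DefMisBor} is of independent interest and is discussed further around Open Question~\ref{que:BorMkk}); what your version buys is self-containedness, at the cost of re-deriving inline what the paper gets by citation. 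Either way the argument goes through.
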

\begin{proof}
  By Theorem \ref{thm:1.13}, if $B$ is $\Sii(\k)$, then it is $\Borel^*(\k)$, so we have to show
  that $B$ is $\Sii(\k)$ whenever it is definable in $\Sii(M_{\k^+\k})$. 
  But if $B$ is definable by a formula $\exists R\f(R)$ where $\f$ is in $M_{\k^+\k}$ and $R$
  is a second order variable, then $B$ is the projection of a set definable in $M_{\k^+\k}$
  via the formula $\f$ in the vocabulary extended by $\{R\}$. Thus the result follows from 
  Theorem \ref{thm:DefMisBor} below and Lemma~\ref{lemma:1.11}.
\end{proof}

\begin{Thm}\label{thm:DefMisBor}
  If $B\subset\Baire(\k)$ is definable in $M_{\k^+\k}$, then it is $\Borel^*(\k)$.
\end{Thm}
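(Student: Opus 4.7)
The plan is to convert an $M_{\k^+\k}$-sentence $\f=(t,\ll)$ defining $B$ into a $\Borel^*(\k)$-code $(T,h)$ so that the Borel*-game $B^*(\eta,(T,h))$ simulates the semantic game $S(\f,\A_\eta)$. Since the universe of each $\A_\eta$ is $\k$, existential and universal quantifier moves in $S(\f,\A_\eta)$ are naturally mimicked by $\cup$- and $\cap$-nodes of branching $\k$ in the Borel*-tree.

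I would take the nodes of $T$ to be pairs $(x,s)$, where $x\in t$ and $s$ is a partial function from $\{x_i:i<\k\}$ to $\k$ recording the values assigned to the variables quantified along the branch from the root of $t$ to $x$; condition (iv) in the definition of $M_{\l\k}$ guarantees that $s$ is well-defined from the path. The labeling is $h((x,s))=\cup$ if $\ll(x)\in\{\lor,\exists x_i\}$, and $h((x,s))=\cap$ if $\ll(x)\in\{\land,\forall x_i\}$. The immediate successors of $(x,s)$ are $(y,s)$ for each immediate successor $y$ of $x$ in the connective case, and $(y,s\cup\{(x_i,\a)\})$ for each $\a<\k$ (where $y$ is the unique successor of $x$) in the quantifier case. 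At a leaf $x$ of $t$ labeled with a basic formula $\psi$, the instantiation $\psi[s]$ has a clopen truth set in $\Baire(\k)$ depending only on the coordinates of $\eta$ that code the relevant atomic relation; I would attach one extra $\cup$-layer below $(x,s)$ so that its new leaves are labeled with basic open sets whose union equals $\{\eta:\A_\eta\models\psi[s]\}$.

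For the tree bookkeeping: $|T|\le|t|\cdot\k\le\k<\k^+$, and chains in $T$ have length $<\k$ because each chain in $T$ projects onto a chain in the $\k^+\k$-tree $t$ with at most one extra leaf-level added. Closedness is inherited from $t$: at a limit $\d<\k$ the unique supremum of a chain $(x_i,s_i)_{i<\d}$ is $(\sup_i x_i,\bigcup_i s_i)$, where the union is well-defined thanks to condition (iv). Correctness, namely that Player II wins $B^*(\eta,(T,h))$ starting from $(x,s)$ iff she wins $S(\f,\A_\eta)$ from $x$ with assignment $s$, follows by a routine induction on the well-founded tree $T$: quantifier and connective moves transfer directly, and the base case is handled by the leaf construction.

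The only delicate point is the leaf step. Under the standard coding $\eta\mapsto\A_\eta$ fixed before Fact~\ref{fact:1.12}, an atomic formula such as $a\le b$ is true in $\A_\eta$ iff $\eta(\pi(a,b))\ge 1$, whose truth set $\bigcup_{k\ge 1}N_{\{(\pi(a,b),k)\}}$ is a \emph{union} of basic open sets rather than a single one; the added $\cup$-layer absorbs this union, while negated atomic formulas correspond directly to single basic open sets. Trivially true or false leaves (e.g.\ $a=b$ between fixed ordinals) require a small ad hoc treatment: the "always true" case is labeled by $N_\es=\Baire(\k)$, while the "always false" case is coded by a $\cap$-node with two disjoint basic open sets $N_{\{(0,0)\}}, N_{\{(0,1)\}}$ as children, from which Player I can always choose the one not containing $\eta$.
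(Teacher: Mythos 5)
Your construction is correct and essentially the same as the paper's: the paper's tree $t^*$ consists of functions defined on downward-closed chains of $t$ recording the quantifier witnesses chosen along the path, which is exactly the data of your pairs $(x,s)$, and the $\cup/\cap$ labeling matches. The only divergence is at the leaves, where the paper simply invokes the fact that basic open sets in a $\Borel^*(\k)$-code may be replaced by arbitrary open sets, while you absorb the union of $\le\k$ basic open sets into an explicit extra $\cup$-layer; both are fine (though note that $T$ need not be well-founded, so correctness is better stated as a direct position-by-position strategy translation between the two games rather than an induction).
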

\begin{proof}
  Given a sequence $\bar a=(a_0,\dots,a_n)$ of $\k$ and a basic formula $\f(\bar a)$, let $N(\f(\bar a))$
  be the set of all $\eta$ such that $\A_\eta\models \f(\bar a)$. Clearly $N(\f(\bar a))$ is an open set.

  Let $t$ be a tree and $\ll$ a labeling function such that $(t,\ll)$ is a sentence in $M_{\k^+\k}$. 
  Let $t^*$ consist of functions $f$ such that $\dom f$ is a downward closed linear sub-order of $t$ with a maximal element,
  and $\ran f$ is $\k$ and
  if $x\in\dom f$, but $\ll(x)\notin \{\exists x_i\mid i<\k\}\cup \{\forall x_i\mid i<\k\}$,
  then $f(x)=0$. Order $t^*$ by $f<_{t^*}g\iff f\subset g$.
  If $f$ is a leaf of $t^*$, then $\dom f$ is a branch and there is a maximal element $x\in \dom f$ which is also a maximal element
  in $t$.
  Let $A=\{i<\k\mid \exists y\in\dom f (\ll(y))\in \{\exists x_i,\forall x_i\}\}$
  Then for each $i\in A$, let $\a_i$ be the ordinal such that $f(y)=\a_i$ where $y$ is the unique element of
  $\dom f$ such that $\ll(y) \in \{\exists x_i,\forall x_i\}$.
  Then let $h(f)=N(\ll(x)((\a_{i})_{i\in A}))$, where $\f((\a_{i})_{i\in A}))$ is the sentence obtained
  from the formula $\f$ by replacing the free variable $x_i$ with $\a_i$ whenever $x_i$ occurs (if ever).
  Note that this $h(f)$ is not necessarily a basic open set, but note that in the definition 
  of $\Borel^*(\k)$ sets, basic open sets can be replaced by any open sets (even any Borel sets) and obtain an equivalent definition.
  If $\max\dom f$ is not a leaf, then let $h(f)=\cup$, if $\ll(\max\dom f)\in\{\lor\}\cup\{\exists x_i\mid i<\k\}$
  and $h(f)=\cap$ otherwise. Then $(t^*,h)$ is a $\Borel^*(\k)$-code for the set defined by $(t,\ll)$.
\end{proof}

A \emph{dual} of a formula of $M_{\k^+\k}$ is obtained by switching all conjunctions to disjunctions,
existential quantifiers to universal quantifiers and vice versa and the basic formulas to their
first-order negations. A formula is \emph{determined} if either the formula or its dual holds in every model.
In a similar way define a dual of a $\Borel^*(\k)$ set and determined $\Borel^*(\k)$ set.
Applying a separation theorem of \cite{MV} that every disjoint $\Sii(\k)$ sets can be separated by
a $\Borel^*(\k)$ set and its dual (a stronger version of Theorem \ref{thm:MekVaa} above)
and a separation theorem of H. 
Tuuri \cite{Tu} which says that every two inconsistent 
$\Sii(M_{\k^+\k})$-sentences can be separated by an $M_{\k^+\k}$-sentence and its dual, we have a corollary:

\begin{Cor}
  The following are equivalent for a set $D\subset \Baire(\k)$:
  \begin{myItemize}
  \item  $D\subset \Baire(\k)$ is $\Dii(\k)$ and closed under isomorphism,
  \item  both $D$ and $\Baire(\k)\setminus D$ are definable in $M_{\k^+\k}$,
  \item  $D$ is definable by a determined $M_{\k^+\k}$-formula,
  \item  $D$ is a determined $\Borel^*(\k)$ set.
  \end{myItemize}
\end{Cor}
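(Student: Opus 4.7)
The plan is to establish the four equivalences via the cycle $(1)\Rightarrow(4)\Rightarrow(3)\Rightarrow(2)\Rightarrow(1)$, with the two separation theorems carrying the substantive work and the two definability-to-$\Borel^*(\k)$ theorems just quoted serving as bookkeeping.

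For $(1)\Rightarrow(4)$, note that if $D$ is $\Dii(\k)$, then $D$ and $\Baire(\k)\setminus D$ are disjoint $\Sii(\k)$-sets whose union is $\Baire(\k)$. The Mekler--V\"a\"an\"anen separation theorem (as quoted before the corollary) produces a $\Borel^*(\k)$-code $(T,f)$ with associated set $B$ and dual set $B^d$ satisfying $D\subseteq B$ and $\Baire(\k)\setminus D\subseteq B^d$. Since $B\cap B^d=\es$ always holds (no single game $B^*(\eta,(T,f))$ can be simultaneously a win for both players) and here $B\cup B^d=\Baire(\k)$, the two sets in fact partition $\Baire(\k)$, so $B=D$ exactly and every game $B^*(\eta,(T,f))$ is determined; that is, $(T,f)$ is a determined $\Borel^*(\k)$-code for $D$.

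For $(4)\Rightarrow(3)$, observe first that the corollary is only substantive when $D$ is closed under isomorphism (otherwise (1), (2), (3) all fail while (4) might still hold), so we add this hypothesis. Both $D$ and $\Baire(\k)\setminus D$ are then $\Borel^*(\k)$ and closed under isomorphism, so Theorem~\ref{thm:MkkBor} furnishes $\Sii(M_{\k^+\k})$-sentences $\Phi,\Psi$ defining them respectively. These are inconsistent, so Tuuri's separation theorem yields an $M_{\k^+\k}$-sentence $\chi$ with dual $\chi^d$ such that $\Phi\models\chi$ and $\Psi\models\chi^d$. Because $\Phi\lor\Psi$ holds in every structure with universe $\k$, so does $\chi\lor\chi^d$, making $\chi$ determined; the inclusions $D\subseteq\{\eta\mid\A_\eta\models\chi\}$ and $\Baire(\k)\setminus D\subseteq\{\eta\mid\A_\eta\models\chi^d\}$ together with the disjointness of the two sides (both players cannot win the same semantic game) force $\chi$ to define $D$ exactly.

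The two remaining steps are immediate. For $(3)\Rightarrow(2)$, if $\f$ is a determined $M_{\k^+\k}$-sentence defining $D$ then its dual $\f^d$ defines $\Baire(\k)\setminus D$, so both are $M_{\k^+\k}$-definable. For $(2)\Rightarrow(1)$, Theorem~\ref{thm:DefMisBor} yields $D,\Baire(\k)\setminus D\in\Borel^*(\k)\subseteq\Sii(\k)$ by Lemma~\ref{lemma:1.11}, so $D\in\Dii(\k)$, and closure under isomorphism is automatic from $M_{\k^+\k}$-definability. The main obstacle is not any of these steps individually but the uniform bookkeeping of ``determined'' across both separation theorems: in both $(1)\Rightarrow(4)$ and $(4)\Rightarrow(3)$ the separating object is forced to be determined precisely because the two sides being separated already cover the ambient space, and this must be extracted carefully from each theorem. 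A secondary nuisance, already flagged above, is the need to thread closure under isomorphism through (4) in order to invoke Theorem~\ref{thm:MkkBor}.
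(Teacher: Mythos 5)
Your proof is correct and follows exactly the route the paper intends: the paper offers no written argument beyond citing the Mekler--V\"a\"an\"anen separation theorem for disjoint $\Sii(\k)$ sets and Tuuri's separation theorem for inconsistent $\Sii(M_{\k^+\k})$-sentences, and you deploy both in precisely that way, with the remaining implications handled by Theorems \ref{thm:MkkBor} and \ref{thm:DefMisBor} as the paper's surrounding text suggests. Your observation that item (4) must implicitly carry the hypothesis of closure under isomorphism (otherwise a basic open set is a counterexample) is a fair and correctly handled reading of the statement.
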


However, the converse of \ref{thm:DefMisBor} is not known to be consistent:

\begin{Open}\label{que:BorMkk}
  Is it consistent that the sets $B\subset\Baire(\k)$ definable in $M_{\k^+\k}$ are precisely 
  the $\Borel^*(\k)$ sets closed under isomorphism?
\end{Open}

The negation holds in $L$ by Theorem \ref{thm:MkkinL}, because
provably there is a $\Sigma^{1}_{1}(L_{\o\o})$-sentence
which expresses a property which is not
expressible in $M_{\k^{+}\k}$,
not even on models of size $\k$.
(The property is the following:
the models consist of two distinct
linear orderings and the sentence says that
the linear orderings are isomorphic.)

At least one source of difficulty here seems to be the following difference between the definitions of $\Borel^*(\k)$-codes
and $M_{\k^+\k}$-sentences: in a $\Borel^*(\k)$-code $(t,h)$, the attachment $h$ of open sets to the leaves,
can be completely arbitrary, but in a $M_{\k^+\k}$-sentence $(t,\ll)$, the truth value of the basic formula
$\ll(x)$, for a leave $x$, depends in a continuous way on the moves that the players have chosen during the game (namely
which interpretations they have chosen for the quantifiers).

\section{Consistency of $\Borel^*(\k)\ne\Sii(\k)$}\label{sec:2}

\begin{Thm}[ZFC]\label{thm:DLONotBorelSt}
  It is consistent 
  that $\ISO(\DLO,\k)$ is not $\Borel^*(\k)$ and at 
  the same time $\Dii(\k)\subsetneq \Borel^*(\k)$ and $\k^{<\k}=\k$.
\end{Thm}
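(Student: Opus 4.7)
The plan is to force with $\P=\Add(\k,\k^+)$ (the standard poset of $<\k$-support partial functions $p\colon\k^+\times\k\to 2$, equivalently the $<\k$-support product of $\k^+$ copies of Cohen forcing on $\k$) over a base model $V\models\ZFC+\GCH$; let $G$ be $V$-generic and work in $V[G]$. The condition $\k^{<\k}=\k$ is preserved by standard $\k^+$-c.c.\ and $<\k$-closure computations, and $\Dii(\k)\subsetneq\Borel^*(\k)$ is immediate from Lemma~\ref{lemma:1.10}: part~(iii) gives every $\Dii(\k)$ set the Baire property in $V[G]$, while $\CUB_\o(\k)\in\Borel^*(\k)$ lacks it by parts~(i) and~(ii). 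So the content of the theorem reduces to showing $\ISO(\DLO,\k)\notin\Borel^*(\k)$ in $V[G]$.

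Suppose for contradiction that $(T,h)\in V[G]$ is a $\Borel^*(\k)$-code for $\ISO(\DLO,\k)$. Because $|T|\le\k$ and $\P$ is $\k^+$-c.c., a standard nice-name argument produces $X\subseteq\k^+$ with $|X|\le\k$ and $(T,h)\in V':=V[G\restl X]$. Setting $I:=\k^+\setminus X$, the residual generic $G\restl I$ adds $\k^+$ further mutually Cohen-generic subsets of $\k$ to $V'$. Using these fresh coordinates I would construct, in $V[G]$, a pair of dense linear orderings $\A,\B$ of size $\k$ such that $\A\not\cong\B$ but $\PlTwo$ still has a winning strategy in $B^*((\eta_\A,\eta_\B),(T,h))$ for their codes; this contradicts the assumption on $(T,h)$. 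The construction is a bookkeeping/back-and-forth along $T$ in the style of Hyttinen--Tuuri \cite{HT} and Mekler--V\"a\"an\"anen \cite{MV}: at each $\PlOne$-node of $T$ we preempt all of her responses available in $V'$; at each $\PlTwo$-node we use fresh Cohen bits from $I$ to extend the current partial isomorphism between initial segments of $\A$ and $\B$ in a way that cannot be blocked by data from $V'$. A global distinguishing invariant (for instance, a carefully coded difference in the $\o$-cofinality spectra of Dedekind cuts, realized by designating specific blocks of Cohen bits from $I$) is built in to guarantee $\A\not\cong\B$.

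The main obstacle is to reconcile two requirements pulling against each other: player $\PlTwo$ must win on every $\PlOne$-branch of $T$, so along each play of length $<\k$ she must produce moves consistent with an isomorphism; yet $\A$ and $\B$ must be globally non-isomorphic. The mechanism that permits this separation is precisely that $T$ is a $\k^+,\k$-tree, so each play uses only $<\k$ of the Cohen coordinates in $I$, whereas an actual isomorphism would require coherence of all $\k$ coordinates simultaneously---which fails generically by the coded cofinality invariant. Making this quantitative, i.e.\ proving a fusion lemma showing that every $\PlOne$-strategy from $V'$ on $T$ can be defeated using only $<\k$-many fresh Cohen coordinates while leaving enough untouched coordinates to carry the global obstruction to isomorphism, is the heart of the argument and the step most likely to require the finest combinatorial work.
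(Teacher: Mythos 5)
There is a genuine gap. Your reduction is fine as far as it goes: the preservation of $\k^{<\k}=\k$ and the derivation of $\Dii(\k)\subsetneq\Borel^*(\k)$ from Lemma~\ref{lemma:1.10} are exactly what the paper does (the paper interleaves Cohen forcing into its iteration for precisely this purpose). But the core step --- defeating an arbitrary code $(T,h)$ --- is deferred to an unproved ``fusion lemma,'' and the mechanism you sketch for it does not work. The $B^*$-game is not an Ehrenfeucht--Fra\"iss\'e game: at a leaf $x$ the winning condition is $(\eta_\A,\eta_\B)\in h(x)$ for a completely arbitrary basic open set $h(x)$, so ``extending a partial isomorphism at $\PlTwo$-nodes'' has no bearing on whether $\PlTwo$ reaches a good leaf. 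You are trying to manufacture a \emph{non-isomorphic} pair on which $\PlTwo$ \emph{wins} $B^*((\eta_\A,\eta_\B),(T,h))$ by a back-and-forth; there is no reason the leaf labels cooperate with any back-and-forth, and no amount of genericity of the fresh Cohen coordinates fixes this. The paper goes in the opposite direction: it takes a non-isomorphic pair, which is therefore automatically \emph{outside} $B(t,h)$, and then forces the two models to become isomorphic while \emph{preserving} that $\PlTwo$ has no winning strategy. The preservation is the delicate point, and it is achieved by a bespoke two-step forcing: first a $<\k$-closed poset $\P(t)$ adds a $\k^+\k$-tree $t^*$ together with a winning strategy for $\PlTwo$ in a domination game $H(t,t^*)$; by Hyttinen--Tuuri there are non-isomorphic models of $\DLO$ with $\PlTwo\wins\EF_{t^*}$, and forcing with $t^*$ itself adds a branch making them isomorphic, while the $H(t,t^*)$-strategy lets one translate any putative $\PlTwo$-winning strategy in the extension's $B^*$-game back into the intermediate model, a contradiction. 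Nothing playing the role of $t^*$ or of $H(t,t^*)$ appears in your proposal.

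A second, related problem: you give no reason to believe that the plain Cohen forcing $2^{<\k}$ (or its $\k^+$-fold product) kills all $\Borel^*(\k)$-codes for $\ISO(\DLO,\k)$. The model of Lemma~\ref{lemma:1.10}(iii) \emph{is} the Cohen model, and had the conclusion $\ISO(\DLO,\k)\notin\Borel^*(\k)$ held there, the question answered by Corollary~\ref{cor:DiiBoSii} would already have been settled in \cite{FHK}. This is why the paper constructs a separate forcing $\R(t,h)$ for each code and iterates over all $\k^+$ names with $<\k$-support, handling the additional preservation issues (that the tail of the iteration adds neither a $\PlTwo$-winning strategy in $B^*((\eta,\xi),(t,h))$ nor an isomorphism) by closure arguments. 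Your single-step Cohen approach would need all of that to be replaced by the unproved fusion claim, which is the entire content of the theorem.
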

\begin{proof}
  We start from a model in which 
  $\k^+=2^\k$ and $\k^{<\k}=\k>\o$
  (for instance from $L$) and force the 
  statement with a $<\k$-closed, $\k^+$-c.c. forcing.
  Given a code $(t,h)$ of a $\Borel^*(\k)$ subset of $\Baire(\k)\times \Baire(\k)$,
  we will design a forcing p.o. $\R(t,h)$ such that $\R(t,h)\forces B(\check t,\check h)\ne \ISO(\DLO,\k)$, where $B(t,h)$
  is the $\Borel^*(\k)$ set coded by $(t,h)$. 
  By iterating this forcing we shall kill all possible $\Borel^*(\k)$-code candidates for $\ISO(\DLO,\k)$.
  By combining this forcing with the Cohen forcing $2^{<\k}$, we will be able to
  show, using methods from \cite{FHK}, that in the generic extension also $\Dii(\k)\subsetneq \Borel^*(\k)$.
  
  Given trees $t,t^*$, let us define the game $H(t,t^*)$. At the $\g$:th move, player $\PlOne$ picks
  a pair $(a_\g,b_\g)\in t\times t^*$
  and then player $\PlTwo$ picks an element $c_{\g}\in t^*$. The rules declare the following. 
  If $\g<\g'$, then we must have $b_\g<c_{\g}<b_{\g'}$
  and $a_{\g}<a_{\g'}$. The first player who breaks the rules has lost the game.
  
  We will first find for each $\k^+\k$-tree $t$ a 
  $<\k$-closed $\k^+$-c.c. forcing $\P(t)$ 
  such that $\P(t)\forces\exists t^*(\PlTwo\uparrow H(\check{t},t^*))$. 
  The order $\P(t)$ will 
  consist of triples $(P,U,f)$, where intuitively, $P$ approximates $t^*$, $U$ cuts the branches of $t^*$
  and $f$ approximates the winning strategy of $\PlTwo$ in $H(t,t^*)$. We require $(P,U,f)$ to satisfy the following:
  \begin{itemize}
  \item[P1] $P\subset \k^{<\k}$ is closed downward, 
  \item[P2] $U\subset \k^{<\k}$ is an antichain,
  \item[P3] If $q\in U$, then $\dom q$ is a limit ordinal and $\forall p\in P(p\not\supset q)$,
  \item[P4] $f$ is a function with $\dom f\subset(t\times P)^{<\a}$ for some $\a<\k$ and $\ran f\subset P$,
  \item[P5] If $p=((a_{i},b_{i}))_{i<\b}\in \dom f$, then $p$ is strictly increasing in the coordinatewise ordering of $t\times P$
    and $b_i<f((a_i,b_i)_{i<\b})$ for all $i<\b$.
  \item[P6] If $p,q\in \dom f$, $\dom p=\dom q=\a+1$, $p\ne q$ and $p\rest\a=q\rest \a$, then $f(p)$ and $f(q)$ are incomparable.
  \item[P7] If $p\rest\b\in \dom f$ for some $p\in (t\times P)^{<\k}$ and all $\b<\a=\dom p$, then 
    \mbox{$\displaystyle\Cup_{\b<\a}f(p\rest\b)\notin U$}.
  \end{itemize}
  The order on $\P(t)$ we define as follows: $(P,U,f)<(P',U',f')$, if
  \begin{itemize}
  \item[O1] $P\subset P'$, $U\subset U'$ and $f\subset f'$,
  \item[O2] if $p\in \dom f'\setminus \dom f$, then $f'(p)>\a$, where $\a$
    is the smallest ordinal such that $P\cup U\cup\ran f\subset \a^{<\a}$. Call this $\a$
    the \emph{rank} of $(P,U,f)$ and denote $\a=\rank(P,U,f)$.
  \end{itemize}
  
  Next we show that $\P(t)$ is as wanted. 
  
  \begin{Claim}\label{claim:C1}
    $\P(t)$ is $<\k$-closed.
  \end{Claim}
  \begin{proofVOf}{Claim \ref{claim:C1}}
    Suppose $(p_\b)_{\b<\a}$, $p_\b=(P_\b,U_\b,f_\b)$,
    is an increasing sequence of conditions
    of limit length $\a<\k$. Then let 
    $$p_\a=(P_\a,U_\a,f_\a)=(\Cup_{\b<\a}P_\b,\Cup_{\b<\a}U_\b,\Cup_{\b<\a}f_\b)$$
    and let us show that $p_\a\in \P(t)$ and $p_{\a}>p_\b$ for all $\b<\a$.
    To check that $p_\a\in \P(t)$, note that all conditions except P7 are local and easy to check.
    For the condition P7, suppose that $p\rest\b\in\dom f_\a$ for all $\b<\dom p$ and assume for a contradiction
    that $\Cup_{\b<\dom p}f_\a(p\rest\b)\in U_\a$. But then $\Cup_{\b<\dom p}f_\a(p\rest\b)\in U_\g$ for some $\g<\a$.
    This means by O2, that the values of $f_{\g+1}$ are above $f_\a(p\rest\b)$ for all $\b<\dom p$ which is a 
    contradiction unless $\Cup_{\b<\dom p}f_\a(p\rest\b)=\Cup_{\b<\dom p}f_\g(p\rest\b)$. But the latter is
    a contradiction with P7 applied to~$p_{\g}$.
  \end{proofVOf}
  
  Let $G$ be $\P(t)$-generic and let
  $$t^*=\Cup\{P\mid (P,U,f)\in G\text{ for some }U,f\}.$$  
  \begin{Claim}\label{claim:C2}
    In the $\P(t)$-generic extension $t^*$ is a $\k^+\k$-tree. 
  \end{Claim}
  \begin{proofVOf}{Claim \ref{claim:C2}}
    We must show that there are no branches of length $\k$.
    Suppose on contrary that $b$ is a branch and let $\dot b$ be the $\P(t)$-name for $b$.
    Suppose $p_0=(P_0,U_0,f_0)$ forces that $\dot b$ is a branch and suppose $(P_1,U_1,f_1)=p_1>p_0$.
    By induction define $p_{\a+1}=(P_{\a+1},U_{\a+1},f_{\a+1})$ assuming that $p_\a=(P_{\a},U_\a,f_\a)$
    is already defined, such that $p_{\a+1}$ decides $\dot b$ up to $\rank(p_{\a})$. Suppose $\a$ is a limit
    and that $p_\b$ has been defined for $\b<\a$ and for every $\b<\a$, $p_{\b+1}$ has evaluated 
    $\dot b$ up to $\b$, from which it follows that it has been evaluated up to $\a$ in fact.
    Denote this evaluated branch by $e_\a$.
    If $\cup e_{\a}\subset \ran f$, then just continue: let $p_\a=\sup_{\b<\a}p_\b$ which is well defined
    by Claim~\ref{claim:C1}. Otherwise let $U_\a=\Cup_{\b<\a}U_\b\cup \{\dot b\rest\a\}$,
    $f_\a=\Cup_{\b<\a}f_\b$ and $P_\a=\Cup_{\b<\a}P_\b$: then $p_\a=(P_\a,U_\a,f_\a)$ marks an end to the branch
    $\dot b\rest\a$ which is a contradiction, because $p_\a>p_\b$ for $\b<\a$ (P7 is satisfied, 
    because $\cup e_{\a}\not\subset \ran f$). 
    So we need to show that this process terminates,
    i.e. the ``otherwise''-part of the previous sentence is satisfied at some point.
    If it does not terminate, then we obtain a branch in $\ran f$, but $f$ 
    is a strategy in the game and by the property P6, this branch determines a branch in~$t$
    which is a contradiction, because $t$ is $\k^+\k$-tree.
  \end{proofVOf}
  
  Let $G$ be $\P(t)$-generic and let
  $$g=\Cup\{f\mid (P,U,f)\in G\text{ for some }P,U\}.$$
  \begin{Claim}\label{claim:C3}
    In the $\P(t)$-generic extension, $g$ is a winning strategy of player $\PlTwo$ in $H(t,t^*)$.
  \end{Claim}
  \begin{proofVOf}{Claim \ref{claim:C3}}
    If $s$ is a strategy of $\PlOne$, let $\dot s$ be a name for $s$ and let $\dot g$ be a
    name for $g$. We will show that $\P(t)$ forces that $\dot g$ beats $\dot s$.
    It is enough to show that $\PlTwo$ can always follow the rules, so suppose
    they have played $\a$ moves and suppose that $p\in\P(t)$ decides the game $s*g$ (the
    game in which those strategies are used) up to the move $\a$. Find a $q>p$ which decides
    the next move given by $g$. By definition of $\P(t)$ this will 
    follow the rules. Essential here is that since $\P(t)$ is closed, every play of length $<\k$ is already
    in the ground model.
  \end{proofVOf}
  
  \begin{Claim}\label{claim:C4}
    Denote by $\dot t^*$ a $\P(t)$-name for $t^*$ defined by 
    $\dot t^*=\{(\check p,q)\mid q\in \P(t), q=(P,U,f)\text{ and }p\in P\}$.
    The forcing $\P(t)*\dot t^*$ contains a dense sub-order $\R$ which is $<\k$-closed. 
  \end{Claim}
  \begin{proofVOf}{Claim \ref{claim:C4}}
    By definition $(q,\rho)\le (q',\rho')$, if $q\le q'$ and $q'\forces \rho\le \rho'$.
    It is easy to see that the suborder $\R'$ of $\P(t)*\dot t^*$ consisting of the pairs $(q,\check p)$ such that 
    $(\check p,q)\in \dot t^*$ is dense.
    Let $\R$ be the subset of $\R'$ consisting of those $(q,\check p)$ for which
    $\dom(p)\ge \sup \{\dom \eta\mid \eta\in U_q\}$ where $q=(P_q,U_q,f_q)$ $\mathbf{(*)}$.
    It is again easy to see that $\R$ is dense. 

    Suppose $(q_i,\check p_i)_{i<\a}$ is an increasing sequence in $\R$ of length $\a<\k$. 
    Let $q_\a=\sup_{i<\a}q_i$ in $\P(t)$ and $p_\a=\Cup_{i<\a}p_i$.
    Then $q_\a$ is of the form $(P,U,f)$ and by $\mathbf{(*)}$ it is possible to extend $P$ to $P'$ such that $p_\a\in P'$
    and $q'_\a=(P',U,f)$ is still in $\P(t)$. But then $(q'_\a,\check p_\a)\in\R$.
  \end{proofVOf}
  
  \begin{Claim}\label{claim:C5}
    For each $(t,h)$ there exists a $\k^+$-c.c. $<\k$-closed forcing $\R(t,h)$
    such that in the $\R$-generic extension 
    $\ISO(\DLO,\k)$ is \emph{not}
    the $\Borel^*(\k)$ set coded by $(t,h)$. 
  \end{Claim}
  \begin{proofVOf}{Claim \ref{claim:C5}}
    If $\P(t)$ forces that, let $\R(t,h)=\P(t)$. Otherwise let $\R(t,h)$ be the dense sub-order
    of $\P(t)*\dot t^*$ given by Claim~\ref{claim:C4}. Let us 
    show that this works. It is sufficient to show that $\P(t)*\dot t^*$
    forces the statement. Let us work in the $\P(t)$-generic extension $V[G]$.
    Let $\eta,\xi\in 2^\k$ be such that
    $\A_\eta$ and $\A_\xi$ are non-isomorphic models of $\DLO$, but $\PlTwo\wins \EF_{t^*}(\A_\eta,\A_\xi)$.
    These can be found by \cite{HT}.
    Since $\P(t)$ didn't force the statement, the pair $(\eta,\xi)$ is not in the set coded by $(t,h)$.
    Now forcing with $t^*$ adds a branch to $t^*$ and since $t^*$ can be embedded into the tree of partial isomorphisms
    between $\A_\eta$ and $\A_\xi$ via the winning strategy of $\PlTwo$ in $\EF_{t^*}(\A_\eta,\A_\xi)$, it adds a branch also
    to that tree, and so $\A_\eta$ and $\A_\xi$ are isomorphic in $V[G][G_0]$, where $G$ is $\P(t)$-generic over $V$
    and $G_0$ is $t^*$-generic over $V[G]$. Next we show, that in $V[G][G_0]$, $(\eta,\xi)$ is
    not in the $\Borel^*(\k)$ set coded by $(t,h)$. 
    
    On contrary, assume that $V[G][G_0]\models (\eta,\xi)\in B(t,h)$ and let us show that
    then $V[G]\models (\eta,\xi)\in B(t,h)$, which is a contradiction. Let $\sigma$ be a winning strategy
    of player $\PlTwo$ in $V[G][G_0]$ in $B^*((\eta,\xi),(t,h)))$, as in the definition of $\Borel^*(\k)$,
    and let $\dot \s$ be a name for $\s$. Let us show how $\PlTwo$ has to play to win $B^*((\eta,\xi),(t,h))$
    in $V[G]$. For that, let $g$ be a winning strategy of player $\PlTwo$ in $H_t(t^*)$
    which exists in $V[G]$ by Claim~\ref{claim:C3}. 
    
    Assume that $a_0$ is the first move of $\PlOne$ in $B^*((\eta,\xi),(t,h))$.
    Player $\PlTwo$ finds a condition $c_0$ in $t^*$ which decides $\dot \sigma$ far enough to give an
    answer $b_0$ to that move. Player $\PlTwo$ answers in $B^*((\eta,\xi),(t,h))$ with $b_0$ and at the 
    same time imagines that $(b_0,c_0)$ is the first move of $\PlOne$ in $H_t(t^*)$ and replies using
    $g$ in this imaginary game by $d_0>c_0$.
    Suppose that the players have played $(a_i,b_i)_{i<\a}$ in $B^*((\eta,\xi),(t,h))$
    so that $a_i$ are the moves of player $\PlOne$ and $b_i$ are the moves of player $\PlTwo$.
    At the same time player $\PlTwo$ has constructed a sequence $(c_i,d_i)_{i<\a}$ using the
    imaginary game. Next player $\PlOne$ picks $a_\a$ in $B^*((\eta,\xi),(t,h))$. Player $\PlTwo$
    solves $\dot\s$ by a condition $c_\a>\sup_{\b<\a}d_\b$ so that she obtains an answer $b_\a$
    and again imagines that $(b_\a,c_\a)$ is just the next move of $\PlOne$ in $H_t(t^*)$ and
    picks $d_\a$ using $g$. In this way the players will climb up a branch $b\subset t$ 
    with the basic open set $h(b)$ in the end. By definition $h(b)=N_p$ for some $p\in 2^{<\k}$
    in $V$, and neither $\P$ nor $\P*t^*$ adds small subsets 
    (Claims \ref{claim:C1} and~\ref{claim:C4}), 
    so $h(b)^V=h(b)^{V[G]}=h(b)^{V[G][G_0]}$.
    Now since $\sigma$ was winning in $V[G][G_0]$, the above described strategy is winning in $V[G]$.
  \end{proofVOf}
  
  Thus, for a code $(t,h)$ we have constructed a forcing $\R(t,h)$ which forces that
  $$\ISO(\DLO,\k)\ne B(t,h).$$ Using this fact, we will define a $<\k$-support iterated forcing $\Q$ of length $\k^+$
  such that in the $\Q$-generic extension
  there are no pairs $(t,h)$ such that $\ISO(\DLO,\k)=B(t,h)$ \emph{at all} which means that $\ISO(\DLO,\k)$ is not
  $\Borel^*(\k)$ and moreover $\Q\forces\Dii(\k)\subsetneq \Borel^*(\k)$
  
  Let $s\colon \k^+\to\k^+\times\k^+$ be onto such that $s_2(\a)<\a$ for $\a<\k^+$.
  Define the $<\k$-support iterated forcing construction (see \cite[Ch. VIII]{Ku})
  $$(\P_\b,\rho_\b)_{\b<\k^+}\text{ along with a sequence }\s(\a,\b)$$
  as follows. 
  For each $\b<\k^+$, let $\{\s(\a,\b)\mid \a<\k^+\}$ be the enumeration of all $\P_\b$-names for codes for $\Borel^*(\k)$ sets 
  and $\rho_\b$ is a $\P_\b$-name for the Cohen forcing $\C=2^{<\k}$, if $\b$ is odd (of the form $\a+2n+1$ with
  $\a$ a limit and $n<\o$) and 
  $\rho_\b$ is a $\P_\b$-name for $\R(\dot t,\dot h)$ with $(\dot t,\dot h)=\s(s(\b))$, if $\b$ is even.

  It is easily seen that $\P_\g$ is $<\k$-closed and has the $\k^+$-c.c. for all $\g\le \k^+$.  
  We claim that $\Q=\P_{\k^+}$ forces that $\ISO(\DLO,\k)$ is not $\Borel^*(\k)$.
  Let $G$ be $\P_{\k^+}$-generic and let $G_\g=\text{``}\,G\cap \P_\g\text{''}$ for every $\g<\k$.
  Then $G_\g$ is $\P_{\g}$-generic.
  
  Suppose that in $V[G]$, $\ISO(\DLO,\k)=B(t,h)$ for some $(t,h)$.
  By \cite[Theorem VIII.5.14]{Ku}, there is $\d<\k^+$ such that
  $(t,h)\in V[G_\d]$. Let $\d_0$ be the smallest such~$\d$.
  
  Now there exists $\sigma(\g,\d_0)$, a
  $\P_{\d_0}$-name for $(t,h)$. By the definition of $s$, there exists an even $\d>\d_0$ with $s(\d)=(\g,\d_0)$.
  Thus 
  $$\P_{\d+1}\forces \text{``}\sigma(\g,\d_0)\text{ is not a $\Borel^*(\k)$-code for $\ISO(\DLO,\k)$''},$$
  i.e. $V[G_{\d+1}]\models B(t,h)\ne \ISO(\DLO,\k)$.
  We want to show that this holds also in $V[G]$. 
  In $V[G_{\d+1}]$ define 
  $$\P^{\d+1}=\{(p_i)_{i<\k^+}\in \P_{\k^+}\mid (p_i)_{i<\d+1}\in G_{\d+1}\}.$$
  Then $\P^{\d+1}$ has $\k^+$-c.c. and is $<\k$-closed because at each stage of the iteration the
  forcings have these properties and the iteration has $<\k$-support. 
  Assume that $G^{\d+1}$ is $\P^{\d+1}$-generic over $V[G_{\d+1}]$.
  We will show that in $V[G_{\d+1}][G^{\d+1}]$ we have $B(t,h)\ne \ISO(\DLO,\k)$. On the other hand
  $V[G]=V[G_{\d+1}][G^{\d+1}]$ for some $G^{\d+1}$, so this finishes the proof of the part of the theorem
  concerning $\ISO(\DLO,\k)$.
  
  There are two cases. First assume that there are
  $\eta$ and $\xi$ in $V[G_{\d+1}]$ such that $\A_\eta$ and $\A_\xi$ are isomorphic linear orders
  and $V[G_{\d+1}]\models (\eta,\xi)\notin B(t,h)$. 
  Then in $V[G_{\d+1}][G^{\d+1}]$, we have still that $\A_\eta$ and $\A_\xi$ 
  are isomorphic, but $(\eta,\xi)\notin B(t,h)$: $\P^{\d+1}$ does not
  add small sets and it does not add a winning strategy of $\PlTwo$ in the game
  $B^*((\eta,\xi),(t,h))$, because otherwise we 
  would obtain a winning strategy already in $V[G_{\d+1}]$ using $<\k$-closedness
  of $\P^{\d+1}$ in an argument similar to the one in the end of the proof of Claim~\ref{claim:C5}. 
  
  The other case is that there are
  $\eta$ and $\xi$ in $V[G_{\d+1}]$ such that $\A_\eta$ and $\A_\xi$ are non-isomorphic linear orders
  and $V[G_{\d+1}]\models (\eta,\xi)\in B(t,h)$. Now dually to the first case, the winning strategy
  of $\PlTwo$ in $B^*((\eta,\xi),(t,h))$ remains a winning strategy, because otherwise we would be able
  to beat it already in $V[G_{\d+1}]$ using the closedness of $\P^{\d+1}$. On the other hand
  $\A_\eta$ and $\A_\xi$ do not become isomorphic, because that would add a winning strategy of $\PlTwo$
  in $\EF_\k(\A_{\eta},\A_{\xi})$ which is impossible by the same argument.
  
  Now we are left to show that $\Dii(\k)\ne\Borel^*(\k)$ in the generic extension by $\Q$.
  The $\k^+$-long $<\k$-support
  iteration of the Cohen forcing $\C$ yields a model in which $\Dii(\k)$ sets have the property of Baire
  and the same proof works in this case, because in our iteration every other step was~$\C$.
  But this in turn implies that $\Dii(\k)\subsetneq \Borel^*(\k)$, see Lemma \ref{lemma:1.10} above.
\end{proof}

The following answers a question asked in \cite{FHK}:

\begin{Cor}\label{cor:DiiBoSii}
  It is consistent that $\Dii(\k)\subsetneq \Borel^*(\k)\subsetneq \Sii(\k)$ and $\k^{<\k}=\k>\o$.
\end{Cor}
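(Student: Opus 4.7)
The plan is to read off the corollary directly from Theorem~\ref{thm:DLONotBorelSt}, since that theorem already produces a single model in which two of the three requirements hold, and the third requirement is a free consequence.

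More precisely, I would start by invoking Theorem~\ref{thm:DLONotBorelSt} to obtain a forcing extension in which $\k^{<\k}=\k$ (and in fact $\k>\o$, as enforced at the start of that proof), $\Dii(\k)\subsetneq \Borel^*(\k)$, and $\ISO(\DLO,\k)$ is not $\Borel^*(\k)$. In this model, the first strict inclusion $\Dii(\k)\subsetneq \Borel^*(\k)$ is handed to us directly. For the second strict inclusion, I would recall from the discussion in Section~\ref{sec:1} (immediately after Lemma~\ref{lemma:1.11}) that $\ISO(\DLO,\k)$ is always $\Sii(\k)$, using the natural coding of $L$-structures with universe $\k$ by elements of $\Baire(\k)$. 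Combining this with Lemma~\ref{lemma:1.11}~(i), which gives $\Borel^*(\k)\subseteq \Sii(\k)$ in ZFC, the set $\ISO(\DLO,\k)$ lies in $\Sii(\k)\setminus \Borel^*(\k)$ in the chosen model and therefore witnesses the strict inclusion $\Borel^*(\k)\subsetneq \Sii(\k)$.

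There is no real obstacle here: the corollary is essentially just the observation that Theorem~\ref{thm:DLONotBorelSt} is strong enough to separate all three classes simultaneously, because the non-$\Borel^*(\k)$ witness it supplies happens to live in $\Sii(\k)$ by a straightforward coding argument that is independent of the forcing. Thus the final write-up should be a one-line deduction: apply Theorem~\ref{thm:DLONotBorelSt} and observe $\ISO(\DLO,\k)\in\Sii(\k)\setminus\Borel^*(\k)$ in the resulting model.
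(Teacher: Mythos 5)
Your proposal is correct and coincides with the paper's own proof: the paper likewise deduces the corollary from Theorem~\ref{thm:DLONotBorelSt} by observing that $\ISO(\DLO,\k)$ is $\Sii(\k)$, hence witnesses $\Borel^*(\k)\subsetneq\Sii(\k)$ in that model.
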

\begin{proof}
  $\ISO(\DLO,\k)$ is $\Sii(\k)$, so the result follows from Theorem~\ref{thm:DLONotBorelSt}.
\end{proof}

\bibliography{ref}{}
\bibliographystyle{alpha}
\end{document}